\author{Jaime Castro P\'erez\footnote{jcastrop@itesm.mx} \\ \textit{Escuela de Ingenier\'ia y Ciencias, Instituto Tecnol\'ologico y de} \\ \textit{Estudios Superiores de Monterrey} \\ \textit{Calle del Puente 222, Tlalpan, 14380, M\'exico D.F., M\'exico.} \\ Mauricio Medina B\'arcenas\footnote{mmedina@matem.unam.mx} \; Jos\'e R\'ios Montes\footnote{jrios@matem.unam.mx} \\  Angel Zald\'ivar\footnote{zaldivar@matem.unam.mx}\\ \textit{Instituto de Matem\'aticas, Universidad Nacional} \\ \textit{Aut\'onoma de M\'exico} \\ \textit{Area de la Investigaci\'on Cient\'ifica, Circuito Exterior, C.U.,} \\ \textit{04510, M\'exico D.F., M\'exico.}}
\title{Boyle's Conjecture and perfect localizations}
\theoremstyle{plain}
\newtheorem{thm}{\protect\theoremname}[section]
  \theoremstyle{definition}
  \newtheorem{rem}[thm]{\protect\remarkname}
 \theoremstyle{definition}
\theoremstyle{definition}
  \theoremstyle{plain}
  \newtheorem{cor}[thm]{\protect\corollaryname}
  \theoremstyle{plain}
  \newtheorem{lemma}[thm]{\protect\lemmaname}
  \theoremstyle{plain}
  \newtheorem{prop}[thm]{\protect\propositionname}
    \theoremstyle{definition}
  \newtheorem{dfn}[thm]{\protect\definitionname}
\providecommand{\lemmaname}{Lemma}
\providecommand{\corollaryname}{Corollary}
\providecommand{\propositionname}{Proposition}
\providecommand{\definitionname}{Definition}
\providecommand{\theoremname}{Theorem}
\providecommand{\remarkname}{Remark}
\providecommand{\examplename}{Example}
\newcommand{\Hom}{Hom_R}
\begin{document}
\maketitle

\begin{abstract}
In this article we study the behaviour of left QI-rings under perfect localizations. We show that a perfect localization of a left QI-ring is a left QI-ring. We prove that Boyle's conjecture is true for left QI-rings with finite Gabriel dimension such that the hereditary torsion theory generated by semisimple modules is perfect. As corollary we get that Boyle's conjecture is true for left QI-rings which satisfy the restricted left socle condition, this result was proved first by C. Faith in \cite{faithhereditary}.
\end{abstract}

\section{Introduction}
Through all this paper $R$ will denote an associative ring with unit element. We will work with unitary left $R$-modules and the category of modules will be denoted by $R$-Mod. For general background on module and ring theory we refer the reader to \cite{andersonrings}, \cite{kaschmodules}, \cite{lamlectures} and \cite{stenstromrings}. 

Remember that an $R$-module $M$ is quasi-injective if every morphism $f:N\to M$ with $N\leq M$ can be extended to an endomorphism of $M$. Equivalently $M$ is quasi-injective if and only if $M$ is fully invariant in its injective hull. A ring $R$ is called left QI-ring if every quasi-injective left $R$-module is injective, these rings were introduced by A. Boyle in \cite{boylehereditary}.  Also in \cite{boylehereditary} is shown that a left QI-ring is left noetherian and left $V$-ring; recall that a left $V$-ring is defined as a ring where every simple left module is injective. 

In \cite{cozzenshomological} the author introduces two examples of non semisimples left QI-rings . These examples are left hereditary rings, that is, every left ideal is projective. 

In \cite[Theorem 5]{boylehereditary} A. Boyle characterizes two-sided hereditary, right noetherian, left QI-rings and she conjectured that every left QI-ring is left hereditary. In \cite{faithhereditary} C. Faith gave an approach to this conjecture. In \cite[Corrolary 3]{faithhereditary} is shown that every left QI-ring is a finite product of simple left QI-rings, so it is enough formulate Boyle's conjecture for simple left QI-rings. 

C. Faith, in \cite[Theorem 18]{faithhereditary}, answers in affirmative Boyle's conjecture for simple left QI-rings which satisfy  the restricted left socle condition (RLS):

\begin{center}
If $I\neq R$ is an essential left ideal, then $R/I$ has non zero socle.
\end{center}

The Theorem 18 in \cite{faithhereditary} was extended by T. Rizvi and D. Van Huynh in \cite{rizvianapproach}. But the conjecture is still open. 		

This paper is organized in three sections, the first one is this introduction and the second one concerns to present the necessary preliminaries. 

Section 3 is where we develop our work and it contains the main results, we prove that if $R$ is a left QI-ring and $\tau$ is a perfect torsion theory then the ring of quotients $_\tau R$ is left QI (Proposition \ref{loc}). Also, we prove that Boyle's conjecture is true for all those left QI-rings with finite Gabriel dimension such that the hereditary torsion theory generated by the semisimple modules is perfect (Theorem \ref{ssperf}).

\section{Preliminaries}

One useful tool to characterize rings, is the hereditary torsion theories. Let us recall the definition of hereditary torsion theory:

\begin{dfn}
A pair of nonempty classes of modules $\tau=(\mathfrak{T},\mathfrak{F})$ is a torsion theory if 
\begin{enumerate}
\item $\Hom(T,F)=0$ for all $T\in\mathfrak{T}$ and $F\in\mathfrak{F}$.
\item If $M$ is such that $\Hom(M,F)=0$ for all $F\in\mathfrak{F}$ then $M\in\mathfrak{T}$.
\item If $N$ is such that $\Hom(T,N)=0$ for all $T\in\mathfrak{T}$ then $N\in\mathfrak{F}$.
\end{enumerate}
It is said $\tau$ is hereditary if $\mathfrak{T}$ is closed under submodules. The set of hereditary tosion theories in $R$-Mod is denoted $R$-tors. The class $\mathfrak{T}$ is called the torsion class and $\mathfrak{F}$ the torsion free class.
\end{dfn}

It can be proved $R$-tors is a frame with order the inclusion of torsion classes, the least and greatest elements of $R$-tors are denoted by $\xi$ and $\chi$ respectively. Given a class of modules $\mathcal{C}$ there exists the least hereditary torsion theory $\xi(\mathcal{C})$ such that $\mathcal{C}$ is contained in the torsion class and there exist the greatest hereditary torsion theory $\chi(\mathcal{C})$ such that $\mathcal{C}$ is contained in the torsion free class. If $\tau=(\mathfrak{T},\mathfrak{F})$, the modules in $\mathfrak{T}$ are called $\tau$-torsion modules and the modules in $\mathfrak{F}$ are called $\tau$-torsion free. For more details see \cite{golantorsion}. 

There exist a bijective correspondence between $R$-tors and $R$-Gab, where $R$-Gab denotes the set of Gabriel topologies in $R$. see \cite[VI, Theorem 5.1]{stenstromrings}. 

An $R$-module $N$ is called $\tau$-cocritical with $\tau\in R$-tors if $N$ is $\tau$-torsion free but every proper factor module is $\tau$-torsion. With this modules can be defined the Gabriel filtration in $R$-tors as:
\[\tau_0=\xi\]
If $i$ is a non limit ordinal:
\[\tau_i=\tau_{i-1}\vee\bigvee\{\xi(N)|N\;\text{is}\;\tau_{i-1}\text{-cocritical}\}\]
If $i$ is a limit ordinal:
\[\tau_i=\bigvee_{j< i}\tau_j\]

Since $R$-tors is a set it must exists the least ordinal $\alpha$ such that $\tau_{\alpha}=\tau_{\alpha+\beta}$ for all ordinals $\beta$. If $\tau_\alpha=\chi$ then we say $R$ has Gabriel dimension equal to $\alpha$ and we denote it as $Gdim(R)=\alpha$.

The concept of QI-ring can be generalized to modules, one paper in this sense is \cite{daunsqi}. In that paper it is shown (\cite[Theorem 3.9]{daunsqi}) that (in a more general context) a ring $R$ is left QI if and only if $R$ has Gabriel dimension and every hereditary pretorsion class is a hereditary torsion class. A hereditary pretorsion class in  $R$-Mod is a class of modules closed under submodules, direct sums and quotients.

Let us recall the concept of perfect localization. 

\begin{dfn}
If $\varphi:R\to S$ is an epimorphism in the category of rings which makes $S$ into a flat right $R$-module, then we will call $S$ a left perfect  localization of $R$.
\end{dfn}

\begin{rem}\label{perf}
Given a hereditary torsion theory $\tau\in R$-tors we will say that $\tau$ is perfect if the ring of quotients $_\tau R$ is a perfect left localization of $R$.
\end{rem}

\begin{rem}\label{ex}
Given $\tau\in R$-tors, let us denote the localization functor as $\mathfrak{q}_\tau:R-Mod\to {_\tau R}-Mod$. If $\tau$ is perfect then $\mathfrak{q}_\tau$ is exact. \cite[XI, Proposition 3.4]{stenstromrings}. We will write $_\tau N=\mathfrak{q}_\tau(N)$.
\end{rem}

\section{Left QI-rings and perfect torsion theories}

\begin{rem}\label{free}
Let $\tau\leq\sigma\in R$-tors. Note that if $N$ is $\sigma$-torsion free then $\mathfrak{q}_\tau(N)$ is $\sigma$-torsion free. In fact, since $\tau\leq\sigma$ then $N$ is $\tau$-torsion free, so we have an essential monomorphism $\psi_N:N\to\mathfrak{q}_\tau(N)$. 
\end{rem}

\begin{lemma}\label{tt}
Let $\tau\leq\sigma\neq\chi\in R$-tors perfect torsion theories with $R$ $\sigma$-torsion free. Let $\mathfrak{q}_\tau:R-Mod\to {_\tau R}-Mod$ denote the localization functor.  
\begin{enumerate}
\item If $\sigma=(\mathfrak{T},\mathfrak{F})$ then $\hat{\sigma}:=(\mathfrak{q}_\tau(\mathfrak{T}),\mathfrak{q}_\tau(\mathfrak{F}))\in {_\tau{R}}$-tors.
\item If $\mathcal{F}_\sigma$ and $\mathcal{F}_{\hat{\sigma}}$ denote the Gabriel topologies in $R$ and $_\tau R$ associated to $\sigma$ and $\hat{\sigma}$ respectively, then:
\[J\in\mathcal{F}_{\hat{\sigma}}\Leftrightarrow J={_\tau I}\;\text{with}\;I\in\mathcal{F}_\sigma\]
\item $_\sigma R$ is $\hat{\sigma}$-closed (as $_\tau R$-module).
\item There is a ring isomorphism $_\sigma R\cong {_{\hat{\sigma}}(_\tau R)}$
\item $\hat{\sigma}\in {_\tau R}$-tors is perfect.

\end{enumerate}
\end{lemma}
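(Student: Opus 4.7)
The plan exploits three standard consequences of $\tau$ being perfect: the forgetful functor from $_\tau R$-Mod to $R$-Mod is fully faithful, with image the $\tau$-closed $R$-modules; every $R$-linear map between two $_\tau R$-modules is automatically $_\tau R$-linear (since $\varphi : R \to {_\tau R}$ is a ring epimorphism); and a subset of a $_\tau R$-module is a $_\tau R$-submodule iff it is an $R$-submodule. Using these, I first identify $\mathfrak{q}_\tau(\mathfrak{T})$ with $\mathfrak{T} \cap {_\tau R}$-Mod and $\mathfrak{q}_\tau(\mathfrak{F})$ with $\mathfrak{F} \cap {_\tau R}$-Mod. The $\mathfrak{F}$-equality is Remark \ref{free} together with the fact that $_\tau R$-modules are their own $\tau$-localizations. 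For $\mathfrak{T}$, the nontrivial inclusion $\mathfrak{q}_\tau(T) \in \mathfrak{T}$ uses $\tau \le \sigma$: since $\tau(T)$ is $\tau$-torsion and vanishes under $\mathfrak{q}_\tau$, one may replace $T$ by $T/\tau(T)$, and then the essential embedding $T \hookrightarrow \mathfrak{q}_\tau(T)$ has $\tau$-torsion (hence $\sigma$-torsion) cokernel, so $\mathfrak{q}_\tau(T) \in \mathfrak{T}$ by extension closure.

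With this identification, item (1) reduces to checking the three torsion-theory axioms. Orthogonality is immediate from full faithfulness. For the two converse axioms: if $M \in {_\tau R}$-Mod has $M \notin \mathfrak{T}$, then the composite $M \twoheadrightarrow M/\sigma(M) \hookrightarrow \mathfrak{q}_\tau(M/\sigma(M))$ is a nonzero $R$-linear, hence $_\tau R$-linear, map to an object of $\mathfrak{q}_\tau(\mathfrak{F})$; dually, if $N \in {_\tau R}$-Mod has nonzero $\sigma$-torsion submodule, that submodule lies in $\mathfrak{T} \cap {_\tau R}$-Mod and provides a nonzero $_\tau R$-linear inclusion into $N$. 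Heredity follows from the coincidence of submodules. For item (2), every left ideal $J$ of $_\tau R$ has the form ${_\tau I}$ with $I := \varphi^{-1}(J)$ (the ideal correspondence for perfect localizations), and exactness of $\mathfrak{q}_\tau$ gives $\mathfrak{q}_\tau(R/I) \cong {_\tau R}/J$. Hence $J \in \mathcal{F}_{\hat\sigma}$ iff ${_\tau R}/J \in \mathfrak{q}_\tau(\mathfrak{T})$ iff $R/I \in \mathfrak{T}$; the backward direction puts the $\tau$-torsion-free quotient $(R/I)/\tau(R/I)$ essentially inside $\mathfrak{q}_\tau(R/I) \in \mathfrak{T}$ by heredity, and then extension-closure against $\tau(R/I) \in \mathfrak{T}$ yields $R/I \in \mathfrak{T}$.

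For (3), $_\sigma R$ is $\hat\sigma$-torsion free by the identification with $\mathfrak{F} \cap {_\tau R}$-Mod. For $\hat\sigma$-injectivity, given $J = {_\tau I} \in \mathcal{F}_{\hat\sigma}$ and a $_\tau R$-linear $f : J \to {_\sigma R}$, I restrict along $\varphi : I \to {_\tau I}$ to an $R$-linear $I \to {_\sigma R}$, extend to $R \to {_\sigma R}$ by $\sigma$-injectivity of $_\sigma R$ over $R$, and read this as a $_\tau R$-linear $_\tau R \to {_\sigma R}$ (via ring-epi) that extends $f$. For (4), the inclusion ${_\tau R} \hookrightarrow {_\sigma R}$ has cokernel a quotient of the $\sigma$-torsion module ${_\sigma R}/R$, hence $\hat\sigma$-torsion; combined with (3), the universal property of the $\hat\sigma$-localization of $_\tau R$ delivers a canonical $_\tau R$-linear isomorphism ${_\sigma R} \cong {_{\hat\sigma}}({_\tau R})$, and both sides being rings of quotients of $_\tau R$ upgrades this to a ring isomorphism. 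For (5), flatness of $_\sigma R$ over $_\tau R$ follows from the natural identification ${_\sigma R} \otimes_{_\tau R} M \cong {_\sigma R} \otimes_R M$ for every $_\tau R$-module $M$ (again by ring-epi) together with flatness of $_\sigma R$ over $R$; the ring epi $_\tau R \to {_\sigma R}$ is obtained by factoring the ring epi $R \to {_\sigma R}$ through $R \to {_\tau R}$. The main obstacle is cleanly justifying the ideal correspondence $J = {_\tau I}$ underpinning (2), since this is where the perfect-localization hypothesis on $\tau$ is used most critically; once that is in hand, the remaining steps chain through exact functors and universal properties.
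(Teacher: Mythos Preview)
Your overall strategy is sound and in several places cleaner than the paper's. The paper proves (1) by explicit tensor--hom adjunction computations, proves the $\Rightarrow$ direction of (2) by choosing an injective $E$ with $\chi(E)=\sigma$ and computing $\Hom(R/(J\cap R),E)$, and proves (5) by invoking the characterization ``$\hat\sigma$ is perfect iff every $_{\hat\sigma}({_\tau R})$-module is $\hat\sigma$-torsion free''. You instead identify $\mathfrak{q}_\tau(\mathfrak{T})=\mathfrak{T}\cap{_\tau R}\text{-Mod}$ and $\mathfrak{q}_\tau(\mathfrak{F})=\mathfrak{F}\cap{_\tau R}\text{-Mod}$, then use full faithfulness of the forgetful functor throughout; for (2) you use the ideal correspondence $J={_\tau(J\cap R)}$ directly; and for (5) you verify flatness and the ring-epimorphism condition by hand via ${_\sigma R}\otimes_{_\tau R}M\cong{_\sigma R}\otimes_R M$. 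Your route is more conceptual and avoids the auxiliary injective $E$, while the paper's is more computational but self-contained.

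There is, however, one genuine slip. Your third ``standard consequence'' --- that an $R$-submodule of an $_\tau R$-module is automatically an $_\tau R$-submodule --- is false. Take $R=\mathbb{Z}$ and the perfect torsion theory with $_\tau R=\mathbb{Q}$: then $\mathbb{Z}$ is a $\mathbb{Z}$-submodule of the $\mathbb{Q}$-module $\mathbb{Q}$, but not a $\mathbb{Q}$-submodule. You only invoke the dangerous direction once, to conclude that $\sigma(N)$ is an $_\tau R$-submodule of the $_\tau R$-module $N$ (for heredity you only need the trivial direction). Fortunately this particular instance is true, for a reason specific to the situation: the $_\tau R$-submodule $L={_\tau R}\cdot\sigma(N)$ is a quotient of ${_\tau R}\otimes_R\sigma(N)=\mathfrak{q}_\tau(\sigma(N))$, which lies in $\mathfrak{T}$ by your identification $\mathfrak{q}_\tau(\mathfrak{T})\subseteq\mathfrak{T}$; hence $L\in\mathfrak{T}$, so $L\subseteq\sigma(N)$ and therefore $L=\sigma(N)$. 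With this correction your argument for (1) goes through, and the rest of your proposal is fine.
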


\begin{proof}
\textit{1}. Let $\tau\leq\sigma\in R$-tors, with $\sigma=(\mathfrak{T},\mathfrak{F})$. Then $\hat{\sigma}=(\mathfrak{q}_\tau(\mathfrak{T}),\mathfrak{q}_\tau(\mathfrak{F}))$ where
\[\mathfrak{q}_\tau(\mathfrak{T})=\{\mathfrak{q}_\tau(M)|M\in\mathfrak{T}\}\]
\[\mathfrak{q}_\tau(\mathfrak{F})=\{\mathfrak{q}_\tau(N)|N\in\mathfrak{F}\}\]

Let $\mathfrak{q}_\tau(M)\in\mathfrak{q}_\tau(\mathfrak{T})$ and $\mathfrak{q}_\tau(N)\in\mathfrak{q}_\tau(\mathfrak{F})$. Since $\tau$ is perfect, $\mathfrak{q}_\tau(M)={_\tau R}\otimes_R M$. Then
\[Hom_{_\tau R}(\mathfrak{q}_\tau(M),{_\tau N})=Hom_{_\tau R}({_\tau R}\otimes_RM,{_\tau N})\cong Hom_R(M,Hom_{_\tau R}({_\tau R},{_\tau N}))\]
\[\cong Hom_R(M,{_\tau N})=0\]
by remark \ref{free}. 

Now, let $\mathfrak{q}_\tau(N)={_\tau N}$ an $_\tau R$-module such that $Hom_{_\tau R}({_\tau R\otimes_R M},{_\tau N})=0$ for all $M\in\mathfrak{T}$. Following the above isomorphisms, we get that $Hom_R(M,{_\tau(N)})=0$, i.e., $_\tau N\in\mathfrak{F}$. Since $\tau\leq\sigma$, we have a monomorphism $\psi_N:N\to {_\tau N}$ so $N\in\mathfrak{F}$. Thus $_\tau N\in\mathfrak{q}_\tau(\mathfrak{F})$.

On the other hand, suppose $_\tau M$ is an $_\tau R$-module such that $Hom_{_\tau R}({_\tau M}, {_\tau N})=0$ for all ${_\tau N}\in\mathfrak{F}$. Then, we have that $Hom_R(M,{_\tau N})=0$. Let $N\in\mathfrak{F}$ and suppose $Hom_R(M,N)\neq 0$. Hence there exists $0\neq f:M\to N$, this implies that $0\neq\psi_Nf:M\to {_\tau N}$. Contradiction. Thus $Hom_R(M,N)=0$ for all $N\in\mathfrak{F}$, hence $M\in\mathfrak{T}$. 

Thus, we have that $\hat{\sigma}$ is a torsion theory. Let us see it is hereditary.

Let $_\tau M\in\mathfrak{q}_\tau(\mathfrak{T})$ and $K\leq {_\tau M}$ an $_\tau R$-submodule. There is a monomorphism $\psi:M/\tau(M)\to {_\tau M}$. Consider $K\cap (M/\tau(M))$ which is a $\sigma$-torsion $R$-module. By \cite[XI,Proposition 3.7]{stenstromrings} $\mathfrak{q}_\tau(K\cap(M/\tau(M)))=K$. Thus $K\in\mathfrak{q}_\tau(\mathfrak{T})$.

\textit{2}$\Leftarrow$. Let $I\in\mathcal{F}_\sigma$. We have the following commutative diagram with exact rows:
\[\xymatrix{0\ar[r] & I\ar[r]\ar[d] & R\ar[r]\ar[d] & R/I \ar[r]\ar[d] & 0 \\ 
0\ar[r] & {_\tau I}\ar[r] & {_\tau R}\ar[r] & {_\tau(R/I)}\ar[r] & 0}\]

Since $R/I$ is $\sigma$-torsion then ${_\tau(R/I)}\in\mathfrak{q}_\tau(\mathfrak{T})$ and ${_\tau R}/{_\tau I}\cong {_\tau(R/I)}$. Thus ${_\tau I}\in\mathcal{F}_{\hat{\sigma}}$.

$\Rightarrow$. Let $E$ be an injective $R$-module such that $\chi(E)=\sigma$. Since $E$ is $\sigma$-torsion free then it is $\tau$-torsion free. Hence $E$ is $\tau$-closed, so $E={_\tau}E$. Let $J\in\mathcal{F}_{\hat{\sigma}}$, since $R$ is $\tau$-torsion free then $J={_\tau(J\cap R)}$. Therefore:
\[\Hom(R/J\cap R,E)=\Hom(R/J\cap R, {_\tau E})\cong\Hom(R/J\cap R,Hom_{_\tau R}({_\tau R, E}))\]
\[\cong Hom_{_\tau R}({_\tau R}\otimes_RR/J\cap R, E)\cong Hom_{_\tau R}({_\tau(R/J\cap R)}, E)=Hom_{_\tau R}({_\tau R}/J,E)\]

Since $E$ is $\sigma$-torsion free then $E$ is $\hat{\sigma}$-torsion free but ${_\tau R}/J$ is $\hat{\sigma}$-torsion. Thus $Hom_{_\tau R}({_\tau R}/J,E)=0$. This implies $\Hom(R/J\cap R,E)=0$ and hence $J\cap R\in\mathcal{F}_\sigma$.

\textit{3}. Let us see first that $_\sigma R$ is $\tau$-closed (as $R$-module). By \cite[IX, Proposition 1.8]{stenstromrings} $_\sigma R$ is $\sigma$-closed, that is, $_\sigma R$ is $\sigma$-torsion free and $\mathcal{F}_\sigma$-injective. Thus $_\sigma R$ is $\tau$-torsion free and since $\mathcal{F}_\tau\subseteq\mathcal{F}_\sigma$ then $_\sigma R$ is $\mathcal{F}_\tau $-injective.

Since $R$ is $\sigma$-torsion free then $_\tau R$ is $\hat{\sigma}$-torsion free and we have following inclusions 
\[R\hookrightarrow {_\tau R}\hookrightarrow {_\sigma R}\]

Also, we have to note that this inclusions are essential, so $_\sigma R$ is $\hat{\sigma}$-torsion free. Now, let $J\in\mathcal{F}_{\hat{\sigma}}$ and  $g:J\to {_\sigma}R$ an $_\tau R$-morphism. Then $J={_\tau(J\cap R)}$ and by (\textit{2}) $R/J\cap R$ is $\sigma$-torsion. If $\psi:R/J\cap R\to {_\tau(R/J\cap R)}={_\tau R/J}$ is the canonical homomorphism then $Ker(\psi)$ and $Coker(\psi)$ are $\tau$-torsion, therefore they are $\sigma$-torsion. So we have the following short exact sequence:
\[0\to \frac{R/J\cap R}{Ker(\psi)}\to {_\tau R/J}\to Coker(\psi)\to 0\]
Thus, $_\tau R/J$ is $\sigma$-torsion.

Since $_\sigma R$ is $\mathcal{F}_\sigma$-injective there exists an $R$-morphism $\bar{g}:{_\tau R}\to {_\sigma R}$ such that $\bar{g}|_J=g$. We have that $_\sigma R$ is $\tau$-closed, so $\bar{g}$ is an $R$-morphism between $\tau$-closed $R$-module, hence $\bar{g}$ is an $_\tau R$-morphism. Thus $_\sigma R$ is $\mathcal{F}_{\hat{\sigma}}$-injective.

\textit{4}. Since $_\sigma R$ is an $_\tau R$-module which is $\hat{\sigma}$-closed and $_\tau R\leq_e {_\sigma R}$ then $_\sigma R\cong {_{\hat{\sigma}}(_\tau R)}$.

\textit{5}. We have $_\sigma R\cong {_{\hat{\sigma}}}({_\tau R})$. Let $N$ be an $_\sigma R$-module. Since $\sigma$ is perfect then $N={_\sigma N}$ with $_RN$ $\sigma$-torsion free. Then $_\tau N\in\mathfrak{q}_\tau(\mathfrak{F})$. Thus $_\sigma N$ is $\hat{\sigma}$-torsion free. By \cite[XI, Ex. 6]{stenstromrings} $\hat{\sigma}\in {_\tau R}$-tors is perfect. 

\end{proof}

\begin{lemma}\label{cocrit}
Let $\tau\leq\sigma$ be perfect torsion theories in $R$-Mod and let $\mathfrak{q}_\tau:R-Mod\to {_\tau R}-Mod$ the localization functor.
\begin{enumerate}
\item If $M$ is $\sigma$-cocritical then $_\tau M$ is $\mathfrak{q}_\tau(\sigma)$-cocritical.
\item If an $_\tau R$-module $K$ is $\mathfrak{q}_\tau(\sigma)$-cocritical then $K$ as $R$-module is $\sigma$-cocritical.
\end{enumerate}
\end{lemma}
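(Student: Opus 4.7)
The plan is to verify both halves of the cocriticality condition --- being torsion free and having every proper quotient torsion --- separately in each direction, using Lemma \ref{tt} to translate between $\sigma\in R$-tors and $\hat{\sigma}:=\mathfrak{q}_\tau(\sigma)\in {_\tau R}$-tors, and the exactness of $\mathfrak{q}_\tau$ from Remark \ref{ex}.

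For (1), suppose $M$ is $\sigma$-cocritical. Then $M \in \mathfrak{F}$, so $_\tau M \in \mathfrak{q}_\tau(\mathfrak{F})$ is $\hat{\sigma}$-torsion free. For a nonzero $_\tau R$-submodule $K \leq {_\tau M}$, Remark \ref{free} makes $M \hookrightarrow {_\tau M}$ an essential $R$-monomorphism, so $K \cap M$ is a nonzero $R$-submodule of $M$; by $\sigma$-cocriticality, $M/(K\cap M)$ is $\sigma$-torsion. Applying $\mathfrak{q}_\tau$ to the short exact sequence $0\to K\cap M\to M\to M/(K\cap M)\to 0$ and invoking the identity $\mathfrak{q}_\tau(K \cap M) = K$ (the same Stenstr\"om XI.3.7 step used at the end of the proof of Lemma \ref{tt}(1)) identifies $_\tau M / K$ with $\mathfrak{q}_\tau(M/(K \cap M)) \in \mathfrak{q}_\tau(\mathfrak{T})$, which is $\hat{\sigma}$-torsion, finishing (1).

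For (2), let $K$ be a $\hat{\sigma}$-cocritical $_\tau R$-module. Then $K \in \mathfrak{q}_\tau(\mathfrak{F})$, so by Remark \ref{free} $K$ is $\sigma$-torsion free as an $R$-module. Given a nonzero $R$-submodule $L \leq K$, which need not be $_\tau R$-stable, the plan is to replace $L$ by $L' := \mathfrak{q}_\tau(L)$, viewed inside the $\tau$-closed module $K$: the map $L \hookrightarrow L'$ is essential with $\tau$-torsion cokernel, so $L'$ is a nonzero $_\tau R$-submodule of $K$ and $L'/L$ is $\tau$-torsion, hence $\sigma$-torsion since $\tau \leq \sigma$. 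By $\hat{\sigma}$-cocriticality $K/L'$ is $\hat{\sigma}$-torsion, and by Lemma \ref{tt}(1) it equals $\mathfrak{q}_\tau(T)$ for some $\sigma$-torsion $R$-module $T$; the short exact sequence $0 \to T/\tau(T) \to \mathfrak{q}_\tau(T) \to \mathfrak{q}_\tau(T)/(T/\tau(T)) \to 0$ then exhibits $\mathfrak{q}_\tau(T)$ as an extension of a $\sigma$-torsion quotient of $T$ by a $\tau$-torsion module, so it is $\sigma$-torsion as $R$-module. Combining these in $0 \to L'/L \to K/L \to K/L' \to 0$ shows $K/L$ is $\sigma$-torsion, completing (2).

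The main obstacle I expect is bookkeeping: at every step one must be careful about whether containments, submodules and quotients are being taken in $R$-Mod or in $_\tau R$-Mod. Perfectness of $\tau$ is exactly what keeps the translation consistent, because it simultaneously provides the exactness of $\mathfrak{q}_\tau$ used to localize the defining short exact sequences and, via Stenstr\"om XI.3.7, the correspondence between $_\tau R$-submodules of $_\tau M$ and their $R$-traces used to transfer cocriticality across the localization.
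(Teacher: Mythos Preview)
Your proof is correct. Part (1) is essentially identical to the paper's argument (the paper cites Stenstr\"om IX, Proposition 4.3 rather than XI, Proposition 3.7 for the identity $K={_\tau(K\cap M)}$, but the content used is the same).

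For part (2) you take a somewhat different route. The paper argues indirectly: given a nonzero $R$-submodule $L<K$, it lets $N/L=t_\sigma(K/L)$ so that $K/N$ is $\sigma$-torsion free, localizes to get $K/{_\tau N}\cong {_\tau(K/N)}$, and derives a contradiction because the left side is $\hat\sigma$-torsion by cocriticality of $K$ while the right side is $\hat\sigma$-torsion free; this forces $N=K$, i.e.\ $K/L$ is $\sigma$-torsion. You instead show directly that $K/L$ is $\sigma$-torsion by exhibiting it as an extension: $L'/L$ is $\tau$-torsion hence $\sigma$-torsion, and $K/L'\in\mathfrak{q}_\tau(\mathfrak{T})$ is shown to be $\sigma$-torsion as an $R$-module by analyzing the canonical map $T/\tau(T)\hookrightarrow\mathfrak{q}_\tau(T)$. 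Your argument is a touch longer but more constructive; the paper's is quicker but passes through the auxiliary contrapositive statement that no quotient of $K$ by a nonzero submodule is $\sigma$-torsion free. One cosmetic remark: Lemma~\ref{tt} as stated carries the extra hypotheses $\sigma\neq\chi$ and $R$ $\sigma$-torsion free, but neither is used in the proof of item (1), so your citation is harmless.
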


\begin{proof}
\textit{1}. Since $M$ is $\sigma$-torsion free then $_\tau M$ is $\mathfrak{q}_\tau(\sigma)$-torsion free. Let $N\leq {_\tau M}$ be an $_\tau R$-submodule. Since $M$ is $\sigma$-torsion free then it is $\tau$-torsion free, so the canonical morphism $\psi_M:M\to {_\tau M}$ is a monomorphism. By \cite[IX, Proposition 4.3]{stenstromrings} $N={_\tau(N\cap M)}$. Since $\tau$ is perfect, then $\mathfrak{q}_\tau$ is exact, hence $\frac{_\tau M}{N}\cong{_\tau(\frac{M}{N\cap M})}$. By hypothesis $\frac{M}{M\cap N}$ is $\sigma$-torsion thus $\frac{_\tau M}{N}$ is $\mathfrak{q}_\tau(\sigma)$-torsion.

\textit{2}. Let $K$ be a $\mathfrak{q}_\tau(\sigma)$-cocritical. Since $K$ is $\mathfrak{q}_\tau(\sigma)$-torsion free then $K={_\tau M}$ for some $M$ $\sigma$-torsion free. So, as $R$-module $K$ is $\sigma$-torsion free. 
Now, let $L< K$ be an $R$-submodule such that $K/L$ is $\sigma$-torsion free. Since $\mathfrak{q}_\tau$ is exact $\frac{K}{_\tau L}\cong{_\tau(\frac{K}{L})}$ but ${_\tau(\frac{K}{L})}$ is $\mathfrak{q}_\tau(\sigma)$-torsion free and $\frac{K}{_\tau L}$ $\mathfrak{q}_\tau(\sigma)$-torsion, this is a contradiction. This implies that $N/L=t_\sigma(K/L)\neq 0$ and $\frac{K}{N}\cong\frac{K/L}{N/L}$ is $\sigma$-torsion free, hence $N=K$. Thus $K/L$ is $\sigma$-torsion.
\end{proof}


\begin{lemma}\label{gab}
Let $R$ be a ring with finite Gabriel dimension, $Gdim(R)=n$. Let $\{\tau_i\}_{i=0}^n$ be the Gabriel filtration in $R$-tors with every $\tau_i$ perfect. If $\mathfrak{q}_{\tau_1}:R-Mod\to {_{\tau_1}R-Mod}$ is the localization functor and $\{\omega_j\}$ is the Gabriel filtration in $_{\tau_1}R$-tors, then $\mathfrak{q}_{\tau_1}(\tau_{i+1})=\omega_i$ for all $0\leq i$.
\end{lemma}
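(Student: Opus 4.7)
The plan is induction on $i$. For the base case $i=0$, we need $\mathfrak{q}_{\tau_1}(\tau_1)=\omega_0=\xi$. Its torsion class is $\mathfrak{q}_{\tau_1}(\mathfrak{T}_{\tau_1})$, but for every $\tau_1$-torsion module $M$ we have $\mathfrak{q}_{\tau_1}(M)=0$, so this torsion class is just $\{0\}$ and the equality with $\xi=\omega_0$ is immediate.

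For the inductive step, assume $\mathfrak{q}_{\tau_1}(\tau_{i+1})=\omega_i$. Writing the Gabriel filtration definitions,
\[\tau_{i+2}=\tau_{i+1}\vee\bigvee\{\xi(N)\mid N\text{ is }\tau_{i+1}\text{-cocritical}\},\qquad \omega_{i+1}=\omega_i\vee\bigvee\{\xi(K)\mid K\text{ is }\omega_i\text{-cocritical}\}.\]
Since $\tau_1\le\tau_{i+1}$, one can replace $\xi(N)$ by $\tau_1\vee\xi(N)$ inside the first join without changing the result, so that every ingredient lies above $\tau_1$ and lands in the range where $\mathfrak{q}_{\tau_1}$ operates on torsion theories as in Lemma \ref{tt}. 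I would then appeal to two facts: first, that the assignment $\sigma\mapsto\hat\sigma=\mathfrak{q}_{\tau_1}(\sigma)$ is an order isomorphism between $\{\sigma\in R\text{-tors}\mid \sigma\ge\tau_1\}$ and $_{\tau_1}R\text{-tors}$ (with inverse $\rho\mapsto\{M\mid\mathfrak{q}_{\tau_1}(M)\in\mathfrak{T}_\rho\}$), via Lemma \ref{tt}(2); and second, that under this bijection $\mathfrak{q}_{\tau_1}(\tau_1\vee\xi_R(N))=\xi_{_{\tau_1}R}(\mathfrak{q}_{\tau_1}(N))$, which follows because generating a torsion class from a module on one side corresponds, in the associated Gabriel topology, to generating a filter on the other.

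Combining these with Lemma \ref{cocrit}, which tells us that the $\omega_i$-cocritical $_{\tau_1}R$-modules are precisely the localizations $_{\tau_1}N$ of the $\tau_{i+1}$-cocritical $R$-modules, we compute
\[\mathfrak{q}_{\tau_1}(\tau_{i+2})=\mathfrak{q}_{\tau_1}\!\Big(\tau_{i+1}\vee\bigvee\big(\tau_1\vee\xi_R(N)\big)\Big)=\omega_i\vee\bigvee\xi_{_{\tau_1}R}(\mathfrak{q}_{\tau_1}(N))=\omega_{i+1}.\]

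The main obstacle is establishing the two lattice-theoretic facts above: that $\mathfrak{q}_{\tau_1}$ commutes with arbitrary joins on the sublattice of torsion theories containing $\tau_1$, and that it sends the theory generated by $N$ (joined with $\tau_1$) to the theory generated by $_{\tau_1}N$. Both should reduce to a transparent check at the level of Gabriel topologies using the correspondence $I\in\mathcal{F}_\sigma\Leftrightarrow {_{\tau_1}I}\in\mathcal{F}_{\hat\sigma}$ of Lemma \ref{tt}(2), but the perfectness of $\tau_1$ is essential to make the localization functor exact and to guarantee that the bijection respects generating classes.
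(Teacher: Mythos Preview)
Your outline is correct in spirit but takes a genuinely different route from the paper, and it leans on a lattice-theoretic fact that the paper's machinery, as stated, does not quite supply.

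The paper's proof is elementary and direct: after the same base case, it fixes $i$ and proves the two inequalities $\omega_i\le\mathfrak{q}_{\tau_1}(\tau_{i+1})$ and $\mathfrak{q}_{\tau_1}(\tau_{i+1})\le\omega_i$ by hand. For the first, any $\omega_{i-1}$-cocritical $K$ is (via Lemma~\ref{cocrit}(2)) $\tau_i$-cocritical as an $R$-module, hence $\tau_{i+1}$-torsion, hence $\mathfrak{q}_{\tau_1}(\tau_{i+1})$-torsion. For the second, the paper checks torsion-free classes: if $L$ is $\omega_i$-torsion free then $_RL$ is $\tau_i$-torsion free, and a nonzero map $N\to L$ from a $\tau_i$-cocritical $N$ would localize to a nonzero $_{\tau_1}R$-map $_{\tau_1}N\to L$ with $_{\tau_1}N$ $\omega_{i-1}$-cocritical (Lemma~\ref{cocrit}(1)), contradicting $\omega_i$-torsion-freeness of $L$. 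No join-preservation or generator-correspondence is invoked.

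Your approach instead pushes the whole computation through the correspondence $\sigma\mapsto\hat\sigma$. That is conceptually cleaner, but note that Lemma~\ref{tt} as written assumes \emph{both} $\tau$ and $\sigma$ are perfect (and $R$ is $\sigma$-torsion free), and the paper's Lemma~\ref{sup} on join-preservation likewise requires each $\sigma_i$ to be perfect. The intermediate torsion theories $\tau_1\vee\xi(N)$ that appear in your computation are not known to be perfect, so you cannot cite Lemma~\ref{tt}(2) or Lemma~\ref{sup} for them. The order-isomorphism $\{\sigma\ge\tau_1\}\cong{_{\tau_1}R}\text{-tors}$ you want is true when $\tau_1$ is perfect (indeed for any hereditary $\tau$, via the Giraud subcategory), but you would have to establish it independently of the paper's lemmas; once you do, join-preservation and the generator identity follow formally. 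So your strategy works, but only after strengthening Lemma~\ref{tt}(1),(2) to drop the perfectness hypothesis on $\sigma$ --- the paper sidesteps this by arguing directly with cocritical modules and torsion-free classes.
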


\begin{proof}

By induction over $i$. If $i=0$ then $\mathfrak{\tau_1}=\xi\in{_{\tau_1}}R$-tors and $\omega_0=\xi$. Now suppose the result is valid for each natural less than $i$. 

By hypothesis of induction $\mathfrak{q}_{\tau_1}(\tau_{i})=\omega_{i-1}$, so
\[\omega_{i}=\omega_{i-1}\vee\bigvee\{\xi(K)|K\;\text{is}\;\omega_{i-1}\text{-cocritical}\}\]
\[=\mathfrak{q}_{\tau_1}(\tau_{i})\vee\bigvee\{\xi(K)|K\;\text{is}\;\mathfrak{q}_{\tau_1}(\tau_{i})\text{-cocritical}\}\]

Let $K$ be a $\mathfrak{q}_{\tau_1}(\tau_{i})$-cocritical, then by Lemma \ref{cocrit}.\textit{2} $K$ as $R$-module is $\tau_{i}$-cocritical, hence $K$ is $\tau_{i+1}$-torsion. Thus $K$ is $\mathfrak{q}_{\tau_1}(\tau_{i+1})$-torsion. Therefore $\omega_i\leq\mathfrak{q}_{\tau_1}(\tau_{i+1})$. By Lemma \ref{cocrit}.\textit{1} if $N$ is $\tau_i$-cocritical then $_{\tau_1}N$ is $\mathfrak{q}_{\tau_1}(\tau_i)=\omega_{i-1}$-cocritical, so $_{\tau_1}N$ is $\omega_i$-torsion.

Let $L$ be a $\omega_i$-torsion free. Then $L$ is $\omega_{i-1}=\mathfrak{q}_{\tau_1}(\tau_i)$-torsion free and hence $_RL$ is $\tau_i$-torsion free. If $L$ is not $\tau_{i+1}$-torsion free there exists an $R$-morphism $0\neq f:N\to L$ with $N$ $\tau_i$-cocritical. Then we have a non zero $_{\tau_1}R$-morphism $_{\tau_1}f:{_{\tau_1}}N\to L$ with $_{\tau_1}N$ $\omega_{i-1}$-cocritical. Contradiction. Thus $L$ is $\tau_{i+1}$-torsion free. This implies that $L$ is $\mathfrak{q}_{\tau_1}(\tau_{i+1})$-torsion free. Thus $\mathfrak{q}_{\tau_1}(\tau_{i+1})\leq\omega_i$.

%
\end{proof}

\begin{cor}\label{gab2}
Let $R$ be a ring with finite Gabriel dimension, $Gdim(R)=n$. Let $\{\tau_i\}_{i=0}^n$ be the Gabriel filtration in $R$-tors. Suppose that every $\tau_i$ is perfect, then $Gdim(_{\tau_1}R)<n$.
\end{cor}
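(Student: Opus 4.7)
The plan is to apply Lemma \ref{gab} directly, using $\tau_n = \chi$ in $R$-tors. Since $Gdim(R) = n$, the Gabriel filtration satisfies $\tau_n = \chi$, which means every $R$-module lies in the torsion class of $\tau_n$. Taking $i = n-1$ in Lemma \ref{gab} yields the equality $\mathfrak{q}_{\tau_1}(\tau_n) = \omega_{n-1}$, where $\{\omega_j\}$ is the Gabriel filtration in $_{\tau_1}R$-tors. The goal is then to show that $\omega_{n-1}$ coincides with the top element $\chi$ of $_{\tau_1}R$-tors, for then $Gdim(_{\tau_1}R) \leq n-1 < n$.

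To see this, I would pick an arbitrary $_{\tau_1}R$-module $K$. Viewed as an $R$-module (via the ring epimorphism $R \to {_{\tau_1}R}$), $K$ is $\tau_n$-torsion since $\tau_n = \chi$. Because $\tau_1$ is perfect, $K$ is $\tau_1$-closed and so $K \cong \mathfrak{q}_{\tau_1}(K)$, which places $K$ in $\mathfrak{q}_{\tau_1}(\mathfrak{T}_n)$, the torsion class of $\mathfrak{q}_{\tau_1}(\tau_n) = \omega_{n-1}$. Hence every $_{\tau_1}R$-module is $\omega_{n-1}$-torsion, i.e. $\omega_{n-1} = \chi$ in $_{\tau_1}R$-tors.

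Combining, the least ordinal at which the Gabriel filtration of $_{\tau_1}R$ stabilizes at $\chi$ is at most $n-1$, so $Gdim(_{\tau_1}R) \leq n-1 < n$, which is the claim.

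The only subtle point is the verification that an $_{\tau_1}R$-module, regarded as $R$-module, is canonically isomorphic to its $\tau_1$-localization; this is immediate from the fact that a perfect torsion theory yields a ring epimorphism, so that $_{\tau_1}R$-modules are exactly the $\tau_1$-closed $R$-modules. Everything else is a direct translation of Lemma \ref{gab} to the top layer of the filtration, so no real obstacle is expected.
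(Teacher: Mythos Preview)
Your argument is correct and follows essentially the same route as the paper: apply Lemma \ref{gab} with $i=n-1$ to obtain $\omega_{n-1}=\mathfrak{q}_{\tau_1}(\tau_n)=\mathfrak{q}_{\tau_1}(\chi)$, and conclude that this equals $\chi$ in ${_{\tau_1}R}$-tors. The paper asserts $\mathfrak{q}_{\tau_1}(\chi)=\chi$ directly, whereas you spell out the justification (every ${_{\tau_1}R}$-module is $\tau_1$-closed and hence lies in $\mathfrak{q}_{\tau_1}(\mathfrak{T}_{\chi})$); this is a welcome clarification but not a different method.
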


\begin{proof}
If $\{\omega_j\}$ is the Gabriel filtration in $_{\tau_1}R$ then, by Lemma \ref{gab} $\mathfrak{q}_{\tau_1}(\tau_{i+1})=\omega_i$ for all $0\leq i$.
Since $Gdim(R)=n$ then $\tau_n=\chi$ so $\omega_{n-1}=\mathfrak{q}_{\tau_1}(\tau_n)=\mathfrak{q}_{\tau_1}(\chi)=\chi\in {_{\tau_1}}R$-tors. This implies that $Gdim({_{\tau_1}}R)\leq n-1$.
\end{proof}

\begin{lemma}\label{sup}
Let $\tau\in R$-tors and $\{\sigma_i\}_{i\in I}\subseteq R$-tors be a family of perfect torsion theories such that $\tau\leq\sigma_i$ for all $i\in I$. If $\mathfrak{q}_\tau:R-Mod\to {_\tau R}-Mod$ is the localization functor then 
\[\mathfrak{q}_\tau(\bigvee_{i\in I}\sigma_i)=\bigvee_{i\in I}\mathfrak{q}_\tau(\sigma_i)\]
\end{lemma}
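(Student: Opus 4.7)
The plan is to show that the torsion-free classes of $\mathfrak{q}_\tau(\bigvee_i \sigma_i)$ and $\bigvee_i \mathfrak{q}_\tau(\sigma_i)$ in ${_\tau R}$-tors coincide; since two hereditary torsion theories are equal if and only if their torsion-free classes are equal, this suffices.

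The key step, extending the first part of the proof of Lemma \ref{tt}, is the following claim: for any $\sigma' \in R$-tors with $\tau \leq \sigma'$, an ${_\tau R}$-module $K$ is $\mathfrak{q}_\tau(\sigma')$-torsion free if and only if $K$, regarded as an $R$-module via the canonical map $R \to {_\tau R}$, is $\sigma'$-torsion free. For the forward direction, every member of $\mathfrak{q}_\tau(\mathfrak{F}_{\sigma'})$ has the form $\mathfrak{q}_\tau(N)$ for some $\sigma'$-torsion free $N$, and Remark \ref{free} ensures $\mathfrak{q}_\tau(N)$ remains $\sigma'$-torsion free as an $R$-module. For the converse, every ${_\tau R}$-module is $\tau$-closed as an $R$-module, so $K \cong \mathfrak{q}_\tau(K)$ canonically; if $K$ is $\sigma'$-torsion free as an $R$-module then $K = \mathfrak{q}_\tau(K) \in \mathfrak{q}_\tau(\mathfrak{F}_{\sigma'})$.

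Combining this characterization with the general lattice fact that the torsion-free class of a join of hereditary torsion theories is the intersection of the individual torsion-free classes (applied in both $R$-tors and ${_\tau R}$-tors), I would produce, for any ${_\tau R}$-module $K$, the chain of equivalences: $K$ is $\bigvee_i \mathfrak{q}_\tau(\sigma_i)$-torsion free $\Leftrightarrow$ $K$ is $\mathfrak{q}_\tau(\sigma_i)$-torsion free for every $i$ $\Leftrightarrow$ $K$ is $\sigma_i$-torsion free as $R$-module for every $i$ $\Leftrightarrow$ $K$ is $\bigvee_i \sigma_i$-torsion free as $R$-module $\Leftrightarrow$ $K$ is $\mathfrak{q}_\tau(\bigvee_i \sigma_i)$-torsion free. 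The first and last equivalences use the key characterization, the middle two use the lattice fact. Hence the two torsion-free classes coincide and the two torsion theories are equal.

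The main obstacle is the key characterization; once that is in hand, the rest is formal manipulation with torsion-free classes and contains no real content. A subtle point worth flagging is that $\bigvee_i \sigma_i$ need not itself be perfect, so $\mathfrak{q}_\tau(\bigvee_i \sigma_i)$ must be interpreted via the assignment $\sigma' \mapsto (\mathfrak{q}_\tau(\mathfrak{T}_{\sigma'}),\mathfrak{q}_\tau(\mathfrak{F}_{\sigma'}))$ from Lemma \ref{tt}; the characterization above shows that this assignment does indeed yield a hereditary torsion theory on ${_\tau R}$-Mod, so everything is well-defined.
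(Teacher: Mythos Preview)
Your argument is correct and is in fact cleaner than the paper's. Both proofs obtain the inequality $\bigvee_i \mathfrak{q}_\tau(\sigma_i)\leq \mathfrak{q}_\tau(\bigvee_i\sigma_i)$ the same way, by comparing torsion-free classes. For the reverse inequality, however, the paper passes to torsion classes and argues by contradiction: it picks a $\tau$-torsion free $N\in\mathfrak{T}_{\bigvee\sigma_i}$ with $\mathfrak{q}_\tau(N)\in\bigcap_i\mathfrak{q}_\tau(\mathfrak{F}_{\sigma_i})$, finds an index $j$ with $\sigma_j(N)\neq 0$, and produces a nonzero intersection inside $\mathfrak{q}_\tau(N)$ between a $\sigma_j$-torsion and a $\sigma_j$-torsion free submodule. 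Your route avoids this entirely by isolating the two-line characterization ``$K$ is $\mathfrak{q}_\tau(\sigma')$-torsion free $\Leftrightarrow$ $_RK$ is $\sigma'$-torsion free,'' which then makes both inequalities a formal consequence of $\mathfrak{F}_{\bigvee}=\bigcap\mathfrak{F}$. A bonus of your approach is that the perfectness of the $\sigma_i$ is never used; only the perfectness of $\tau$ enters, via the step ``every $_\tau R$-module is $\tau$-closed,'' and you should state that hypothesis explicitly since the lemma as written does not.

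Two small points. First, in your chain of equivalences you have the attributions reversed: the first and third biconditionals are the lattice fact, the second and fourth are your key characterization. Second, your closing remark that the characterization ``shows that this assignment does indeed yield a hereditary torsion theory'' is slightly imprecise: what it shows directly is that $\mathfrak{q}_\tau(\mathfrak{F}_{\sigma'})$ is a hereditary torsion-free class. That $\mathfrak{q}_\tau(\mathfrak{T}_{\sigma'})$ is the matching torsion class still needs the argument of Lemma~\ref{tt}.1 (which, as you implicitly observe, does not actually use perfectness of $\sigma$, only of $\tau$). For the purpose of proving the present lemma this does not matter, since equality of torsion-free classes already forces equality of the torsion theories.
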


\begin{proof}
Write $\bigvee_{i\in I}\sigma_i=(\mathfrak{T}_{\bigvee \sigma_i}, \mathfrak{F}_{\bigvee \sigma_i})$. Then 
\[\mathfrak{q}_\tau(\bigvee_{i\in I}\sigma_i)=(\mathfrak{q}_\tau(\mathfrak{T}_{\bigvee \sigma_i}),\mathfrak{q}_\tau(\mathfrak{F}_{\bigvee \sigma_i}))\]
The torsion free class of $\bigvee_{i\in I}\sigma_i$ is described as $\mathfrak{F}_{\bigvee\sigma_i}=\bigcap_{i\in I}\mathfrak{F}_{\sigma_i}$. So, if $\mathfrak{q}_\tau(N)\in\mathfrak{q}_\tau(\mathfrak{F}_{\bigvee\sigma_i})$ then $N\in \bigcap_{i\in I}\mathfrak{F}_{\sigma_i}$, hence $\mathfrak{q}_\tau(N)\in\bigcap_{i\in I}\mathfrak{q}_\tau(\mathfrak{F}_{\sigma_i})$. Thus
\[\bigvee_{i\in I}\mathfrak{q}_\tau(\sigma_i)\leq \mathfrak{q}_\tau(\bigvee_{i\in I}\sigma_i)\]
Now, suppose $\bigvee_{i\in I}\mathfrak{q}_\tau(\sigma_i)< \mathfrak{q}_\tau(\bigvee_{i\in I}\sigma_i)$, that is, $\mathfrak{T}_{\bigvee\mathfrak{q}_\tau(\sigma_i)}<\mathfrak{q}_\tau(\mathfrak{T}_{\bigvee \sigma_i})$ then there exists $0\neq \mathfrak{q}_\tau(N)\in \mathfrak{q}_\tau(\mathfrak{T}_{\bigvee \sigma_i})$ such that $\mathfrak{q}_\tau(N)\in\mathfrak{F}_{\bigvee\mathfrak{q}_\tau(\sigma_i)}=\bigcap_{i\in I}\mathfrak{F}_{\mathfrak{q}_\tau(\sigma_i)}$.

Since $\tau(N/\tau(N))=0$ then $\mathfrak{q}_\tau(N)=\mathfrak{q}_\tau(N/\tau(N))$, we have that $N\in\mathfrak{T}_{\bigvee \sigma_i}$ then $N/\tau(N)\in\mathfrak{T}_{\bigvee \sigma_i}$. Therefore, we can assume $N$ is $\tau$-torsion free. By the choice of $N$ there exists $j\in I$ such that $N\notin\mathfrak{F}_{\sigma_j}$, so $\sigma_j(N)=N'\neq 0$. On the other hand, $\mathfrak{q}_\tau(N)\in \bigcap_{i\in I}\mathfrak{F}_{\mathfrak{q}_\tau(\sigma_i)}$ then there exists $N_j\in\mathfrak{F}_{\sigma_j}$ such that $\mathfrak{q}_\tau(N)=\mathfrak{q}_\tau(N_j)$. Since $\tau<\sigma_j$, $N_j$ is $\tau$-torsion free, thus $N_j\leq_e\mathfrak{N_j}=\mathfrak{q}_\tau(N)$. This implies that $0\neq N'\cap N_j$ but $N'$ is $\sigma_j$-torsion and $N_j$ is $\sigma_j$-torsion free, this is a contradiction. Thus 
\[\bigvee_{i\in I}\mathfrak{q}_\tau(\sigma_i)= \mathfrak{q}_\tau(\bigvee_{i\in I}\sigma_i)\]
\end{proof}

\begin{rem}
In general Lemma \ref{gab} is not true for infinite ordinals. Let $R$ be a ring with Gabriel dimension, $Gdim(R)=\alpha$ , $\omega<\alpha$. Let $\{\tau_i\}_{i=0}^\alpha$ the Gabriel filtration in $R$-tors and suppose that every $\tau_i$ is perfect. Then, by the proof of Lemma \ref{gab}, if $\{w_j\}$ is the Gabriel filtration in ${_{\tau_1} R}$-tors we have that $\mathfrak{q}_{\tau_1}(\tau_{i+1})=w_{i}$ for all $i\in\mathbb{N}$. If $\omega$ is the first infinite ordinal, by Lemma \ref{sup}
\[\mathfrak{q}_{\tau_1}(\tau_\omega)=\mathfrak{q}_{\tau_1}(\bigvee_{i\in\mathbb{N}}\tau_i)=\bigvee_{i\in\mathbb{N}}\mathfrak{q}_{\tau_1}(\tau_i)=\bigvee_{i\in\mathbb{N}}w_{i-1}=w_\omega\] 
\end{rem}

\begin{dfn}
An injective left $R$-module $E$ is called completely injective if every factor module of $E$ is injective.
\end{dfn}

The following result is well known, see \cite[18, Ex. 10]{andersonrings}

\begin{prop}
Let $R$ be a ring. $R$ is left hereditary if and only if every injective module is completely injective.
\end{prop}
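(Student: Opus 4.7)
The plan is to reduce everything to the homological characterisation that $R$ is left hereditary if and only if its left global dimension is at most $1$, equivalently $\mathrm{Ext}_R^{2}(-,-)=0$, equivalently every left $R$-module has injective dimension at most $1$. With this in hand, both directions become essentially one-line $\mathrm{Ext}$ computations, so I would recall (or cite from \cite{stenstromrings} or \cite{lamlectures}) this equivalence first and then argue the two implications symmetrically.

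For the forward direction, assume $R$ is left hereditary and let $E$ be injective with a submodule $N\leq E$. Applying $\Hom(M,-)$ to
\[0\to N\to E\to E/N\to 0\]
produces the long exact sequence fragment
\[\mathrm{Ext}_R^{1}(M,E)\to \mathrm{Ext}_R^{1}(M,E/N)\to \mathrm{Ext}_R^{2}(M,N).\]
The left term vanishes because $E$ is injective, and the right term vanishes because $R$ is hereditary. Hence $\mathrm{Ext}_R^{1}(M,E/N)=0$ for every $M$, so $E/N$ is injective. Since $N$ was arbitrary, $E$ is completely injective.

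For the converse, suppose every injective $R$-module is completely injective. Given any $R$-module $N$, embed it in its injective hull $E(N)$; by hypothesis $E(N)/N$ is itself injective, so
\[0\to N\to E(N)\to E(N)/N\to 0\]
is an injective resolution of $N$ of length at most $1$. Therefore every module has injective dimension $\leq 1$, so $R$ has left global dimension at most $1$, hence $R$ is left hereditary.

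There is no real obstacle; the argument is routine once one invokes the $\mathrm{Ext}$ characterisation of hereditariness. If one wishes to avoid higher $\mathrm{Ext}$ altogether, the forward direction admits an elementary Baer-criterion proof: given $f:I\to E/N$ with $I$ a left ideal, form the pullback of $E\to E/N$ along $f$ to obtain a short exact sequence $0\to N\to P\to I\to 0$; this splits because $I$ is projective (by hereditariness), producing a lift $\tilde f:I\to E$ which then extends to $R\to E$ by injectivity of $E$ and projects back to the desired extension $R\to E/N$.
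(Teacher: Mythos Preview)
Your argument is correct and entirely standard. Note, however, that the paper does not actually prove this proposition: it simply records it as well known and refers the reader to \cite[18, Ex.~10]{andersonrings}. So there is no ``paper's own proof'' to compare against; your write-up supplies exactly the routine homological argument one would expect for that exercise, and the alternative Baer-criterion pullback argument you sketch is likewise fine.
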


\begin{prop}\label{loc}
Let $R$ be a left QI-ring and $\tau\in R$-tors a perfect torsion theory. Then the ring of quotients $_\tau R$ is a left QI-ring.
\end{prop}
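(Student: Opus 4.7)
The plan is to argue directly from the definition of QI-ring. Let $M$ be a quasi-injective ${_\tau R}$-module; I will show $M$ is injective in ${_\tau R}$-Mod by transferring the problem to $R$-Mod and applying the QI hypothesis on $R$. Two properties of perfect torsion theories drive the argument: ${_\tau R}$ is flat over $R$, and $R\to {_\tau R}$ is a ring epimorphism, so the forgetful functor ${_\tau R}$-Mod $\to R$-Mod is fully faithful. In particular $Hom_R(A,B)=Hom_{_\tau R}(A,B)$ for any ${_\tau R}$-modules $A,B$, and every ${_\tau R}$-module is $\tau$-closed as an $R$-module (since $\mathfrak{q}_\tau(M)={_\tau R}\otimes_R M\cong M$ whenever $M$ is an ${_\tau R}$-module).

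The first step is to identify $E_R(M)$ with $E_{_\tau R}(M)$. Since $M$ is $\tau$-torsion free (being $\tau$-closed), essentiality of $M$ inside $E_R(M)$ forces $E_R(M)$ to be $\tau$-torsion free as well; and $R$-injectivity of $E_R(M)$ makes it $\mathcal{F}_\tau$-injective. Hence $E_R(M)$ is $\tau$-closed, so it inherits a unique ${_\tau R}$-module structure extending its $R$-action. This ${_\tau R}$-module is ${_\tau R}$-injective: any ${_\tau R}$-linear map out of an ${_\tau R}$-submodule of an ${_\tau R}$-module extends $R$-linearly by $R$-injectivity of $E_R(M)$, and the extension is automatically ${_\tau R}$-linear by the ring-epimorphism property. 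Since every ${_\tau R}$-submodule is in particular an $R$-submodule, the essential extension $M\leq_e E_R(M)$ in $R$-Mod remains essential in ${_\tau R}$-Mod, so $E_R(M)=E_{_\tau R}(M)$.

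With this identification and the equality $End_R(E_R(M))=End_{_\tau R}(E_R(M))$ (again from the ring epimorphism), the characterization of quasi-injectivity as full invariance in the injective hull shows that $M$ is quasi-injective as an $R$-module if and only if it is quasi-injective as an ${_\tau R}$-module. Hence $M$ is $R$-quasi-injective, and therefore $R$-injective by the QI hypothesis on $R$. Applying the same map-extension argument once more shows that any $R$-injective ${_\tau R}$-module is ${_\tau R}$-injective, so $M$ is injective in ${_\tau R}$-Mod and ${_\tau R}$ is a left QI-ring.

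The main obstacle is the technical bookkeeping in the first step: one must verify that $\tau$-closedness is preserved in passing to the injective hull and that the relevant hom-sets, endomorphism rings, and essential extensions all coincide across the two module categories. Everything rests on the perfectness of $\tau$, concretely on $R\to {_\tau R}$ being a flat ring epimorphism. Once these transfer results are in hand, the reduction to the QI hypothesis on $R$ is immediate.
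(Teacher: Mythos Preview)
Your proof is correct and follows essentially the same route as the paper's: identify $E_R(M)$ with $E_{_\tau R}(M)$, use that $R$-endomorphisms of this $\tau$-closed module are automatically ${_\tau R}$-linear to transfer full invariance (hence quasi-injectivity) down to $R$-Mod, apply the QI hypothesis on $R$, and then push $R$-injectivity back up to ${_\tau R}$-injectivity. The paper simply cites Stenstr\"om \cite[IX, Propositions 2.5 and 2.7]{stenstromrings} for the injective-hull identification and the injectivity transfer, whereas you unpack these facts inline from the flat-epimorphism description of a perfect localization; the logical skeleton is identical.
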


\begin{proof}
Let $_\tau A$ be a quasi-injective $_\tau R$-module. Then we can consider $A$ as a $\tau$-torsion free $R$-module and by \cite[IX, Proposition 2.5]{stenstromrings} $E(A)$ is an injective envelope of $_\tau A$ in $_\tau R$-Mod. Hence $_\tau A\leq E(A)$ is a fully invariant $_\tau R$-submodule. 

Let $f\in End_R(E(A))$, since $E(A)$ is $\tau$-closed then $f$ is an $_\tau R$-morphism. Then $f(_\tau A)\leq {_\tau A}$, i.e., $_\tau A$ is a quasi-injective $R$-module. Since $R$ is left QI, $_\tau A$ is an injective $R$-module. Thus by \cite[IX, Proposition 2.7]{stenstromrings} $_\tau A$ is an injective $_\tau R$-module. 
\end{proof}

\begin{rem}\label{socg}
Let $R$ be a left QI-ring. Consider the class of all semisimple left $R$-modules, it is known that this class is a hereditary pretorsion class but since $R$ is left QI then, semisimple modules form a hereditary torsion class \cite[Theorem 3.9]{daunsqi}. Let us denote the hereditary torsion theory associated to the semisimple torsion class by $\tau_{ss}$. The radical associated to $\tau_{ss}$ is $Soc$. 

In the same way, if we consider the pretorsion class of all singular modules it is a hereditary torsion class. We will denote the hereditary torsion class by $\tau_g$ and the radical associated by $\mathcal{Z}$. Notice that if $R$ is a simple ring then $\tau_g$ is the unique coatom in $R$-tors.
\end{rem}

\begin{thm}\label{ssperf}
Let $R$ be a (simple) left QI-ring. Suppose that $Gdim(R)=n$ and let $\{\tau_i\}_{i=1}^n$ be the Gabriel filtration in $R$-tors. Suppose $\tau_1$ is perfect. If $_{\tau_1}R$ is left hereditary then $R$ is left hereditary.
\end{thm}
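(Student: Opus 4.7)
The plan is to show that every injective left $R$-module is completely injective; by the unlabelled proposition immediately preceding the theorem, this is equivalent to $R$ being left hereditary. Let $E$ be an injective $R$-module. Since $\tau_1$ is a hereditary torsion theory, $\tau_1(E)=Soc(E)$ is an injective direct summand of $E$, using that $R$ is noetherian and a left $V$-ring (both standard consequences of the QI hypothesis), so arbitrary sums of simples are injective. Write $E=Soc(E)\oplus E'$ with $E'$ injective and $\tau_1$-torsion free. Because $\tau_1$ is perfect, $E'$ is $\tau_1$-closed, hence a $_{\tau_1}R$-module, and by \cite[IX, Proposition 2.7]{stenstromrings} (compare the proof of Proposition \ref{loc}) it is injective as a $_{\tau_1}R$-module. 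I will also need the converse: a $\tau_1$-torsion free $R$-module that is $_{\tau_1}R$-injective is $R$-injective. This is a Baer-type extension argument: given $f\colon I\to M$ with $I\leq R$ a left ideal, $f$ factors through $I/\tau_1(I)\hookrightarrow{_{\tau_1}I}$, extends along the $_{\tau_1}R$-inclusion $_{\tau_1}I\hookrightarrow{_{\tau_1}R}$ using $_{\tau_1}R$-injectivity of $M$, and restricts back along $R\to{_{\tau_1}R}$.

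For a submodule $K\leq E$, let $\pi\colon E\to E'$ denote the projection and put $K_2=\pi(K)$. The projection produces the short exact sequence
\[0\longrightarrow Soc(E)/(K\cap Soc(E))\longrightarrow E/K\longrightarrow E'/K_2\longrightarrow 0.\]
Its left-hand term is semisimple, hence injective, so it suffices to prove $E'/K_2$ injective. The obstacle here is that $K_2$ need not be a $_{\tau_1}R$-submodule of $E'$ (an $R$-submodule of a $_{\tau_1}R$-module is not in general closed under the enlarged action), which prevents a direct appeal to the hypothesis on $_{\tau_1}R$.

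To overcome this I would replace $K_2$ by the $_{\tau_1}R$-submodule $\tilde K_2={_{\tau_1}R}\cdot K_2$ of $E'$. Flatness of $_{\tau_1}R$ makes $_{\tau_1}R\otimes_R K_2\hookrightarrow E'$ a monomorphism with image $\tilde K_2$, so $\tilde K_2\cong{_{\tau_1}K_2}$. Since $K_2\leq E'$ is $\tau_1$-torsion free, the canonical map $K_2\to{_{\tau_1}K_2}$ is an embedding with $\tau_1$-torsion cokernel, so $\tilde K_2/K_2$ is $\tau_1$-torsion, thus semisimple by Remark \ref{socg}, and therefore injective. Meanwhile $E'/\tilde K_2$ is an $_{\tau_1}R$-quotient of the injective $_{\tau_1}R$-module $E'$, so by the hypothesis that $_{\tau_1}R$ is left hereditary it is $_{\tau_1}R$-injective, and hence $R$-injective by the equivalence noted in the first paragraph.

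Assembling, the sequence $0\to\tilde K_2/K_2\to E'/K_2\to E'/\tilde K_2\to 0$ exhibits $E'/K_2$ as an extension of two injectives (splitting at the first term), so $E'/K_2$ is injective; combined with the previous sequence this gives $E/K$ injective. The step I expect to require the most care is the identification $\tilde K_2\cong{_{\tau_1}K_2}$ together with the $\tau_1$-torsion of $\tilde K_2/K_2$ --- the conceptual heart of the argument --- along with the equivalence of $R$- and $_{\tau_1}R$-injectivity for $\tau_1$-torsion free modules under perfect $\tau_1$. Once these are in place, everything else follows mechanically from Remark \ref{socg}, the direct-sum closure of injectives over noetherian rings, and the hereditariness of $_{\tau_1}R$.
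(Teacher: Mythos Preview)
Your argument is correct and follows the same overall strategy as the paper --- split off the semisimple part, handle the $\tau_1$-torsion free part via the hereditariness of $_{\tau_1}R$, and pass between $R$- and $_{\tau_1}R$-injectivity using that $\tau_1$ is perfect --- but the execution differs. The paper restricts at the outset to an \emph{indecomposable non-singular} injective $E$ (invoking \cite[Proposition~14A]{faithhereditary} at the end to pass to arbitrary injectives), decomposes the \emph{quotient} $F$ as $Soc(F)\oplus F'$, and reduces to the case $Soc(F)=0$; this forces $Ker(\rho)$ to be $\tau_1$-saturated in the $\tau_1$-closed module $E$, hence already $\tau_1$-closed, so no saturation step is needed. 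You instead decompose the \emph{injective} $E$, work with an arbitrary $K\leq E$, and explicitly replace $K_2$ by its $_{\tau_1}R$-span $\tilde K_2$, controlling the resulting semisimple piece $\tilde K_2/K_2$. Your route is more self-contained (no appeal to Faith's Proposition~14A, and it treats all injectives uniformly), at the cost of the extra saturation step; the paper's route is slightly leaner once the reduction to indecomposable non-singular injectives is accepted. The equivalence of $R$- and $_{\tau_1}R$-injectivity for $_{\tau_1}R$-modules that you single out as the delicate point is exactly what the paper also uses implicitly, via \cite[IX, Proposition~2.7]{stenstromrings}.
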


\begin{proof}
Since $R$ is a left QI-ring then  $\tau_1=\tau_{ss}$ and by hypothesis $_{\tau_1}R$ is left hereditary.

Now, let $E$ be an indecomposable non singular injective left $R$-module. Let $E\to F$ an epimorphism. Since $R$ is a left noetherian and left $V$-ring $F=Soc(F)\oplus F'$ where $F'$ is ${\tau_1}$-torsion free and $Soc(F)$ is injective. So, to prove $R$ is left hereditary is enough to prove that every factor module $F$ of $E$ with  $Soc(F)=0$ is injective.

Let $\rho:E\to F$ be an epimorphism such that $Soc(F)=0$. This implies that $Ker(\rho)\in Sat_{\tau_1}(E)$. By \cite[Proposition 4.2]{stenstromrings} $Sat_{\tau_1}(E)$ consist of the ${\tau_1}$-closed submodules of $E$. Consider the following diagram
\[\xymatrix{0\ar[r] & Ker(\rho)\ar[r]\ar[d] & E\ar[r]\ar[d] & F\ar[r]\ar[d]^{_F\psi} & 0 \\ 0\ar[r] & Ker(\rho)\ar[r] & E\ar[r] & _{\tau_1}F\ar[r] & 0	}\]

Since ${\tau_1}$ is perfect the localization functor is exact and $Ker(\rho)$ and $E$ are ${\tau_1}$-closed, so the second row is exact. This implies $F$ is ${\tau_1}$-closed. Thus $\rho$ is an $_{\tau_1}R$-morphism. Since $_{\tau_1}R$ is left hereditary and $E$ is an injective $_{\tau_1}R$-module then $F$ is an injective $_{\tau_1}R$-module. Thus $F$ is an injective $R$-module.

By \cite[Proposition 14A]{faithhereditary} every injective $R$-module is completely injective, thus $R$ is left hereditary.
 
\end{proof}

\begin{thm}\label{qi}
Let $R$ be a (simple) left QI-ring. Suppose that $Gdim(R)=n$ and let $\{\tau_i\}_{i=1}^n$ be the Gabriel filtration in $R$-tors. Then the following conditions are equivalent:
\begin{enumerate}
\item Every $\tau_i$ is perfect
\item $R$ is left hereditary.
\end{enumerate}
\end{thm}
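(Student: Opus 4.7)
The plan is to prove the equivalence by induction on $n = Gdim(R)$. For $(1)\Rightarrow(2)$, the Section 3 machinery combines cleanly: Proposition \ref{loc} transports the QI property under perfect localization, Lemmas \ref{tt} and \ref{gab} show that the Gabriel filtration of ${_{\tau_1}R}$ is the image of the filtration of $R$ under $\mathfrak{q}_{\tau_1}$ with each piece remaining perfect, Corollary \ref{gab2} strictly drops the Gabriel dimension, and Theorem \ref{ssperf} serves as the inductive engine lifting hereditariness back from ${_{\tau_1}R}$ to $R$.

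For the base case $n = 1$, one has $\tau_1 = \chi$; since $\tau_1 = \tau_{ss}$ in a left QI-ring (as already used in Theorem \ref{ssperf}), every module is semisimple, so $R$ is a semisimple artinian ring, trivially left hereditary. For the inductive step, assume the statement for all (simple) left QI-rings of Gabriel dimension less than $n$, and suppose every $\tau_i$ is perfect. Proposition \ref{loc} makes ${_{\tau_1}R}$ left QI. Lemma \ref{gab} identifies its Gabriel filtration as $\omega_i = \mathfrak{q}_{\tau_1}(\tau_{i+1})$; Lemma \ref{tt}.5 (applied to the pair $\tau_1 \leq \tau_{i+1}$) shows that each $\omega_i$ is perfect in ${_{\tau_1}R}$-tors; and Corollary \ref{gab2} gives $Gdim({_{\tau_1}R}) \leq n - 1$. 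The induction hypothesis therefore yields that ${_{\tau_1}R}$ is left hereditary, and Theorem \ref{ssperf} lifts this to $R$.

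For the reverse direction $(2)\Rightarrow(1)$, I would argue that when $R$ is simple, left hereditary, and left QI, each hereditary torsion theory $\tau_i$ in the Gabriel filtration is automatically perfect. Hereditariness forces every quotient of an injective to be injective, so each $\tau_i$-torsion free module is $\tau_i$-injective; combined with the fact that $R$ is left noetherian, this should imply that ${_{\tau_i}R}$ is flat over $R$ and that $R \to {_{\tau_i}R}$ is a ring epimorphism, which are the two defining conditions of perfectness.

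The main obstacle I anticipate is the reverse direction: unlike the forward direction, it does not reduce formally to the preceding lemmas and requires a direct verification of flatness together with the ring-epimorphism condition. This depends on the structural features of simple left hereditary left QI-rings (for instance the Cozzens examples), and it is where the simplicity hypothesis really does work, by ensuring that the $\tau_i$-closed modules coincide with the ${_{\tau_i}R}$-modules.
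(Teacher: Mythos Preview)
Your forward direction $(1)\Rightarrow(2)$ is exactly the paper's argument: induction on $n$, base case semisimple, inductive step via Proposition~\ref{loc}, Corollary~\ref{gab2}, Lemma~\ref{tt}, and Theorem~\ref{ssperf}.

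For $(2)\Rightarrow(1)$ you are making the direction harder than it is and misidentifying where the work lies. The paper dispatches it in one line: a left QI-ring is left noetherian, and for a left hereditary left noetherian ring \emph{every} hereditary torsion theory is perfect, by \cite[XI, Corollary~3.6]{stenstromrings}. Your sketch (quotients of injectives are injective, hence every $\tau$-torsion free module is $\tau$-injective, hence $\tau$ is perfect) is precisely the content of that result, so there is no genuine obstacle and no need to verify flatness or the epimorphism condition by hand. In particular, simplicity plays no role whatsoever in this implication; the parenthetical ``(simple)'' in the statement is there only because, by Faith's reduction, Boyle's conjecture may be treated for simple rings, not because either direction of the proof requires it. Your closing paragraph therefore misdiagnoses the difficulty: the reverse implication is the trivial one, and the forward direction---which you handled correctly---is where all the Section~3 machinery is actually used.
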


\begin{proof}
$\Rightarrow$ By induction over $n$. 

If $n=1$ then $R$ is semisimple, and thus $R$ is hereditary.

Suppose the result is true for all left QI-rings $R$ with $Gdim(R)<n$ such that every element in the Gabriel filtration is perfect. Let $R$ be a left QI-ring with $Gdim(R)=n$. By hypothesis $\tau_1=\tau_{ss}$ is perfect.

 By Proposition \ref{loc} $_{\tau_1}R$ is a left QI-ring and by Lemma \ref{gab2} $Gdim(_{\tau_1}R)<n$. Since the Gabriel filtration in $_{\tau_1}R$-tors is $\{\mathfrak{q}(\tau_i)|1\leq i <n\}$ then, by Lemma \ref{tt} we can apply the induction hypothesis. Hence $_{\tau_1}R$ is left hereditary. Thus by Theorem \ref{ssperf} $R$ is left hereditary.

$\Leftarrow$. If $R$ is left hereditary and left QI-ring then every hereditary torsion theory is perfect \cite[XI, 
Corollary 3.6]{stenstromrings}.
\end{proof}

\begin{dfn}
An $R$-module $M$ satisfies the restricted left socle condition (RLS) if for any essential submodule $N\neq M$, the factor module $M/N$ has non zero socle.
\end{dfn}

\begin{prop}\label{socr}
Let $R$ be a simple left QI-ring which is non semisimple. The following are equivalent:
\begin{enumerate}
\item $R$ satisfies RLS.
\item $\tau_{ss}=\tau_g$
\item $Gdim(R)=2$.
\item There exists a non singular indecomposable completely injective module $E$ which is $\tau_{ss}$-cocritical.
\end{enumerate}
 
\end{prop}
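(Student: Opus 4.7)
The plan is to prove the equivalences by a cyclic chain
(1) $\Leftrightarrow$ (2), (2) $\Rightarrow$ (3) $\Rightarrow$ (2), and (2) $\Rightarrow$ (4) $\Rightarrow$ (1), relying throughout on three standing observations. By Remark \ref{socg}, the Gabriel filtration begins with $\tau_1=\tau_{ss}$, and since $R$ is simple, $\tau_g$ is the unique coatom of $R$-tors. In a simple non-semisimple left $V$-ring every simple module is singular: otherwise a nonsingular simple would embed in $R$ as a minimal left ideal, making $\operatorname{Soc}(R)$ a nonzero two-sided ideal and forcing $R$ to be semisimple; hence $\tau_{ss}\le\tau_g$. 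Finally, since $R$ is left noetherian and left $V$, Bass-Papp applies and $\operatorname{Soc}(M)$ is injective for every module $M$, so it always splits off as a direct summand.

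For (1) $\Rightarrow$ (2), given a singular $M$ I would write $M=\operatorname{Soc}(M)\oplus M'$. If $M'\neq 0$, a cyclic $Rm\cong R/\operatorname{Ann}(m)\subseteq M'$ has $\operatorname{Ann}(m)$ essential, and (1) yields $\operatorname{Soc}(R/\operatorname{Ann}(m))\neq 0$, contradicting $\operatorname{Soc}(M')=0$; so $M$ is semisimple and $\tau_{ss}=\tau_g$. The converse (2) $\Rightarrow$ (1) is immediate: an essential $I\neq R$ makes $R/I$ singular, hence semisimple by (2), so $\operatorname{Soc}(R/I)=R/I\neq 0$. For (2) $\Rightarrow$ (3), $\tau_1=\tau_g$ is a coatom and noetherian induction provides a $\tau_1$-cocritical cyclic $R/I$; then $\tau_1<\tau_1\vee\xi(R/I)\leq\tau_2\leq\chi$, and coatomicity forces $\tau_2=\chi$, i.e., $Gdim(R)=2$.

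For (3) $\Rightarrow$ (2), I would argue by contradiction: assume $Gdim(R)=2$ but $\tau_{ss}<\tau_g$; noetherian extraction inside a singular non-semisimple module produces a cyclic $\tau_1$-cocritical $R/I$ that is singular. The essential-filling step is to show $E(R/I)=R/I$: for any $0\neq L\subseteq E(R/I)$, the image $(R/I+L)/L\cong(R/I)/(R/I\cap L)$ is semisimple by cocriticality and therefore injective, so it splits $E(R/I)/L$; essentiality of $R/I$ in $E(R/I)$ then forces the complement to vanish, so $E(R/I)/L$ is already semisimple. Since $\tau_2=\tau_1\vee\bigvee\{\xi(N):N\text{ is }\tau_1\text{-cocritical}\}=\chi$ but every singular cocritical $N$ satisfies $\xi(N)\leq\tau_g<\chi$, a nonsingular cocritical $N_0$ must also exist, and comparing $\xi(R/I)$ and $\xi(N_0)$ inside the coatom structure of $R$-tors yields the required contradiction. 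For (2) $\Rightarrow$ (4), the same essential-filling argument, applied to a $\tau_1$-cocritical $R/I$ which is now nonsingular under (2), produces $E=E(R/I)$ that is indecomposable, nonsingular, completely injective, and $\tau_{ss}$-cocritical. For (4) $\Rightarrow$ (1), the identity $\chi(E)=\tau_g$ (since $E$ is nonsingular, $\tau_g\leq\chi(E)<\chi$, and $\tau_g$ is the unique coatom) combined with complete injectivity of $E$ and its $\tau_{ss}$-cocriticality allows one to transfer essentiality of $I\subset R$ into a semisimple quotient of $E$, exhibiting $\operatorname{Soc}(R/I)\neq 0$.

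The main obstacle is the essential-filling step for $E(R/I)/L$ and its use in the direction (3) $\Rightarrow$ (2): showing that the semisimple image of $R/I$ inside the quotient $E(R/I)/L$ fills the quotient requires careful bookkeeping because essentiality of $R/I$ in $E(R/I)$ is not preserved by arbitrary quotients; and the subsequent reconciliation with the join decomposition of $\tau_2$, to rule out the coexistence of singular cocriticals with the hypothesis $Gdim(R)=2$, is the crux of the argument.
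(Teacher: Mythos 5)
Your arcs $(1)\Leftrightarrow(2)$ and $(2)\Rightarrow(3)$ are correct and essentially the paper's arguments (the paper runs the single cycle $1\Rightarrow2\Rightarrow3\Rightarrow4\Rightarrow1$, but your socle-splitting proof of $(1)\Rightarrow(2)$ and the coatom argument for $(2)\Rightarrow(3)$ are the same ideas). The first genuine gap is the ``essential-filling step,'' and you have diagnosed it yourself without repairing it: from the splitting $E(R/I)/L=\bigl((R/I)+L\bigr)/L\oplus C/L$ one only gets $C\cap(R/I)\subseteq L$, which is compatible with $C\supsetneq L$; essentiality of $R/I$ in $E(R/I)$ simply does not pass to the quotient, so nothing forces the complement to vanish (and the stated goal ``$E(R/I)=R/I$'' is a further confusion --- what you need is that $E(R/I)$ is $\tau_{ss}$-cocritical, not that $R/I$ is injective). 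The paper avoids this entirely by citing Golan \cite[Proposition 2.1]{golancocritically}: the injective hull of a $\tau_{ss}$-cocritical module is again $\tau_{ss}$-cocritical. Note that in your direction $(2)\Rightarrow(4)$ there is an elementary fix you missed: $R/I$ cocritical is uniform, so $E(R/I)$ is uniform and every nonzero $L\leq E(R/I)$ is essential, whence $E(R/I)/L$ is singular, hence semisimple by $(2)$, hence injective; but in $(3)\Rightarrow(2)$ no such fix is available, because semisimplicity of singular modules is exactly what is at stake there.

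The second, deeper gap is that your two hard arcs are not proofs. For $(3)\Rightarrow(2)$, ``comparing $\xi(R/I)$ and $\xi(N_0)$ inside the coatom structure'' exhibits no contradiction: at the level of the lattice $R$-tors, the configuration $\tau_{ss}\vee\bigvee\{\xi(N): N \text{ singular cocritical}\}\leq\tau_g$ coexisting with a nonsingular cocritical $N_0$ whose $\xi(N_0)$ pushes the join $\tau_2$ up to $\chi$ is perfectly consistent; ruling out singular $\tau_{ss}$-cocriticals when $Gdim(R)=2$ is precisely the content of the proposition and needs module-theoretic input, which the paper supplies only via the detour $3\Rightarrow4\Rightarrow1\Rightarrow2$. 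Likewise $(4)\Rightarrow(1)$ is missing its mechanism: the paper's proof hinges on two facts you never obtain --- since $R$ is simple, hence prime, $E$ is up to isomorphism the \emph{unique} nonsingular indecomposable injective \cite[Corollary 2.15]{mauacc}, and $E(R)\cong E^k$ \cite[Theorem 2.20]{mauacc}; then Faith's Proposition 14A makes $E(R)$ completely injective, and for $I\leq_e R$ the quotient $E(R)/I$ is semisimple with $R/I$ embedded in it. Your observation $\chi(E)=\tau_g$ alone only gives $\Hom(R/I,E)=0$ for singular $R/I$, which says nothing about $Soc(R/I)$. So the skeleton is sound and the easy directions check out, but both load-bearing steps ($(3)\Rightarrow(2)$ and $(4)\Rightarrow(1)$) remain unproved, and the one concrete technique you propose in their place fails as stated.
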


\begin{proof}
$\textit{1} \Rightarrow \textit{2}$. Since $R$ is non semisimple, then by Remark \ref{socg} $\tau_{ss}\leq\tau_{g}$. Now let $M$ be a singular module and $0\neq m\in M$. Then $(0:m)$ is an essential left ideal of $R$. Thus $0\neq Soc(R/(0:m))=Soc(Rm)$. This implies that $Soc(M)\leq_eM$ but $Soc(M)$ is a direct summand, so $M=Soc(M)$.

$\textit{2}\Rightarrow\textit{3}$. If $\{\tau_i|i\geq 0\}$ is the Gabriel filtration in $R$-tors then $\tau_1=\tau_{ss}=\tau_g$. Since $R$ is simple $\tau_g$ is a coatom in $R$-tors, thus $\tau_2=\chi$. 

$\textit{3}\Rightarrow\textit{4}$. If $Gdim(R)=2$, then the Gabriel filtration in $R$-tors is $\{\xi,\tau_{ss}, \chi\}$. Therefore
\[\chi=\tau_{ss}\vee\bigvee\{\xi(N)|N\;\tau_{ss}\text{-cocritical}\}\]
Assume that every $\tau_{ss}$-cocritical module is singular, then $\chi\leq\tau_g$, this is a contradiction. Hence, there exists a non singular $\tau_{ss}$-cocritical module $N$. Since $N$ is cocritical, it is uniform and so $E(N)$ is a non singular indecomposable injective module. By \cite[proposition 2.1]{golancocritically}   $E(N)$ is also $\tau_{ss}$-cocritical, thus we are done.

$\textit{4}\Rightarrow\textit{1}$. Since $R$ is a simple ring then $R$ is a prime ring. By \cite[Corollary 2.15]{mauacc}, $E$ is up to isomorphism the only non singular indecomposable injective module . Now, by \cite[Theorem 2.20]{mauacc} $E(R)\cong E^k$ for some $k>0$; since $E$ is completely injective then $E(R)$ so does by \cite[Proposition 14A]{faithhereditary}. Let $I\leq_eR$, then $I\leq_eE(R)$ and so $E(R)/I$ is semisimple. Thus $R/I$ is semisimple.


\end{proof}

\begin{rem}
In \cite[Theorem 17]{faithhereditary} it is constructed an indecomposable injective module $E$ such that $E$ is non semisimple and satisfies $RLS$. Then $Soc(E)=0$, that is, $E$ is $\tau_{ss}$-torsion free. Since $E$ is uniform and satisfies $RLS$ then $E$ is $\tau_{ss}$-cocritical. Thus if $E$ is nonsingular then $E$ satisfies condition \textit{4} of Proposition \ref{socr}. 
\end{rem}

As Corollary we have the next result due to C. Faith \cite[Theorem 18]{faithhereditary} 

\begin{cor}
Any left QI-ring $R$ with restricted left socle condition is left hereditary.
\end{cor}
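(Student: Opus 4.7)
The plan is to reduce to the simple case via Faith's decomposition, and then combine Proposition~\ref{socr} with Theorem~\ref{ssperf}. Since every left QI-ring is a finite product of simple left QI-rings (Faith, cited in the introduction) and a finite direct product of rings is left hereditary if and only if each factor is, I may assume $R$ is simple; moreover RLS descends from a product to each simple factor, because an essential proper left ideal $J_i$ of $R_i$ lifts to the essential proper left ideal $J_i\times\prod_{j\neq i}R_j$ of $\prod R_j$, whose quotient is $R_i/J_i$. The semisimple case is trivial, so I assume $R$ is a simple, non-semisimple left QI-ring satisfying RLS.

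By Proposition~\ref{socr}, the RLS hypothesis forces $Gdim(R)=2$ and $\tau_{ss}=\tau_g$, so the Gabriel filtration of $R$-tors collapses to $\{\xi,\tau_{ss},\chi\}$. To invoke Theorem~\ref{ssperf} I must check that $\tau_1=\tau_{ss}$ is perfect and that $_{\tau_{ss}}R$ is left hereditary. Since $R$ is simple left QI it is left noetherian (Boyle), so it is a simple prime left Goldie ring; Goldie's theorem then supplies the classical left ring of quotients $Q_{cl}(R)$, which is simple Artinian. The embedding $R\to Q_{cl}(R)$ is the Ore localization at the regular elements, is a flat ring epimorphism, and its associated Gabriel topology consists precisely of the left ideals containing a regular element, namely $\mathcal{F}_{\tau_g}$. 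Under the identification $\tau_{ss}=\tau_g$, this realises $_{\tau_{ss}}R\cong Q_{cl}(R)$, shows that $\tau_{ss}$ is perfect, and gives that $_{\tau_{ss}}R$ is semisimple Artinian and in particular left hereditary.

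With both hypotheses of Theorem~\ref{ssperf} in place, $R$ is left hereditary, and reassembling the Faith decomposition finishes the general case. The main obstacle is really the external input that the classical Ore quotient of a simple left Goldie ring is a perfect localization in the sense used here, i.e.\ that the Gabriel topology of left ideals containing a regular element is of finite type and yields a flat epimorphism; once that piece of standard Goldie theory is cited, the corollary is a direct assembly of Proposition~\ref{socr} and Theorem~\ref{ssperf}.
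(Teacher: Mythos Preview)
Your proof is correct and follows essentially the same route as the paper: both use Proposition~\ref{socr} to obtain $Gdim(R)=2$ with $\tau_1=\tau_{ss}=\tau_g$, verify that $\tau_g$ is perfect, and then invoke the main theorem. The only cosmetic differences are that the paper cites Stenstr\"om's noetherian criterion for perfectness of $\tau_g$ rather than Goldie's theorem, and applies Theorem~\ref{qi} directly (whose $n=2$ case unwinds to exactly your argument via Theorem~\ref{ssperf} with $_{\tau_1}R$ semisimple); your explicit reduction to the simple case is in fact more careful than the paper's own proof, which tacitly assumes it.
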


\begin{proof}
By Proposition \ref{socr} $R$ has $Gdim(R)=2$. Hence the Gabriel filtration in $R$-tors is $\{\xi,\tau_g, \chi\}$ where $\xi$ and $\chi$ are the least and greatest elements of $R$-tors respectively. The element $\tau_g\in R$-tors is the Goldie's torsion theory and it is perfect because $R$ is left noetherian \cite[Proposition 2.12 and Proposition 3.4]{stenstromrings}. Thus, by Theorem \ref{qi} $R$ is left hereditary.
\end{proof}

\begin{lemma}\label{prodrls}
Let $R=R_1\times\cdots\times R_n$ be a finite product of rings. Then $R$ satisfies $RLS$ if and only if $R_i$ satisfies RLS for all $1\leq i\leq n$. 
\end{lemma}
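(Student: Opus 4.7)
The plan is to exploit the component structure of a finite product of rings. Let $e_1, \ldots, e_n$ be the central orthogonal idempotents associated with the decomposition $R = R_1 \times \cdots \times R_n$. Every left ideal $I$ of $R$ decomposes as $I = I_1 \times \cdots \times I_n$ where $I_j = e_j I$ is a left ideal of $R_j$, and conversely every such product is a left ideal of $R$. Two component-wise facts drive the argument: first, $I = I_1 \times \cdots \times I_n$ is essential in $R$ if and only if each $I_j$ is essential in $R_j$, since any nonzero left ideal of $R$ also decomposes and nontrivial intersection can be tested slot by slot; second, because a simple $R$-module is annihilated by all but one of the $e_j$, simple $R$-modules identify with simple $R_j$-modules (as $j$ varies), and hence $Soc_R(M_1 \times \cdots \times M_n) = Soc_{R_1}(M_1) \times \cdots \times Soc_{R_n}(M_n)$ for any family of left $R_j$-modules $M_j$.

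For the direction $(\Rightarrow)$, fix an index $i$ and let $I_i \neq R_i$ be an essential left ideal of $R_i$. Form $I := R_1 \times \cdots \times I_i \times \cdots \times R_n$, which is a proper essential left ideal of $R$ by the first observation above. Applying RLS for $R$ produces $Soc_R(R/I) \neq 0$; but $R/I$ is concentrated in the $i$-th slot, so by the socle decomposition $Soc_R(R/I) = Soc_{R_i}(R_i/I_i)$, whence $Soc(R_i/I_i) \neq 0$.

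For the converse $(\Leftarrow)$, suppose every $R_j$ satisfies RLS and let $I = I_1 \times \cdots \times I_n$ be a proper essential left ideal of $R$. Each $I_j$ is essential in $R_j$, and at least one $I_j$ must be proper (otherwise $I = R$). For such an index $j$, RLS for $R_j$ gives $Soc(R_j/I_j) \neq 0$, and the socle decomposition then yields $Soc_R(R/I) \neq 0$.

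The only step needing care is the component-wise description of essential left ideals and of the socle; neither claim is deep, but both must be stated precisely so that the RLS hypothesis and conclusion match up in each direction. I do not expect a genuine obstacle here — the argument is bookkeeping around the idempotent decomposition.
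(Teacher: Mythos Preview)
Your proof is correct and follows essentially the same route as the paper's: decompose ideals via the product structure, test essentiality and socle componentwise, and pass RLS back and forth through the identification $R/I \cong \bigoplus_j R_j/I_j$. Your version is in fact a bit more careful in the $(\Leftarrow)$ direction, since you note that only \emph{some} $I_j$ need be proper, whereas the paper tacitly applies RLS to every factor $R_i/I_i$.
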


\begin{proof}
$\Rightarrow$. Let $1\leq i\leq n$ and $I_i$ an essential left ideal of $R_i$. Then $I=R_1\times\cdots I_i\times\cdots R_n$ is an essential left ideal of $R$. By hypothesis, $R/I$ contains a simple $R$-module $S$, and we have that $R/I\cong R_i/I_i$. Thus $S$ is a simple $R_i$-module and it can be embedded in $R_i/I_i$.

$\Leftarrow$. Let $I$ be an essential left ideal of $R$, then $I=I_1\times\cdots\times I_n$ with $I_i$ an essential left ideal of $R_i$. By hypothesis $R_i/I_i$ contains a simple $R_i$-module and we have that $R/I\cong (R_1/I_1)\oplus\cdots\oplus (R_n/I_n)$. Thus $R/I$ contains a simple $R$-module. 
\end{proof}

\begin{rem}\label{prodinj}
Let $R=R_1\times\cdots\times$ be a product of rings. Notice that $E$ is a non singular indecomposable injective $R$-module then $E$ is a non singular indecomposable injective $R_i$-module for some $1\leq i\leq n$. On the other hand, if $E_i$ is a non singular indecomposable injective $R_i$-module then $E_i$ is a non singular indecomposable injective $R$-module.
\end{rem}

\begin{thm}
Let $R=R_1\times\cdots\times R_n$ be a left QI-ring such that each $R_i$ is a simple left $QI$-ring and non semisimple. The following conditions are equivalent:
\begin{enumerate}
\item $R$ satisfies $RLS$.
\item For each $1\leq i\leq n$ there exists a non singular indecomposable injective $R_i$-module $E_i$ 
which are $\tau_{ss}$-cocritical as $R$-modules.
\item $Gdim(R)=2$. 
\item $\tau_{ss}=\tau_g$ in $R$-tors.

\end{enumerate}
\end{thm}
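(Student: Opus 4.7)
The plan is to reduce the four equivalences to componentwise conditions on each factor ring $R_i$, and then invoke Proposition \ref{socr} applied to each simple non-semisimple left QI-ring $R_i$. The starting observation is that $R$-Mod decomposes as the product $\prod_{i=1}^n R_i$-Mod, so every $R$-module $M$ splits as $M=\bigoplus_{i=1}^n M_i$ with $M_i$ an $R_i$-module, and every hereditary torsion theory $\sigma\in R$-tors is determined by an $n$-tuple $(\sigma_1,\ldots,\sigma_n)$ with $\sigma_i\in R_i$-tors; an $R$-module $M$ is $\sigma$-torsion (resp.\ torsion free) precisely when each $M_i$ is $\sigma_i$-torsion (resp.\ torsion free). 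Under this identification semisimplicity and singularity are detected componentwise, so $\tau_{ss}$ in $R$-tors corresponds to $(\tau_{ss}^{R_1},\ldots,\tau_{ss}^{R_n})$ and $\tau_g$ in $R$-tors corresponds to $(\tau_g^{R_1},\ldots,\tau_g^{R_n})$. The Gabriel filtration is likewise taken componentwise, so $Gdim(R)=\max_i Gdim(R_i)$.

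Next I would translate each of the four conditions into a componentwise statement. Condition \textit{1} is componentwise by Lemma \ref{prodrls}. Condition \textit{4}, $\tau_{ss}=\tau_g$ in $R$-tors, holds if and only if $\tau_{ss}^{R_i}=\tau_g^{R_i}$ for every $i$, by the identification above. For condition \textit{3}, since each $R_i$ is simple and non-semisimple one has $Gdim(R_i)\geq 2$, so $Gdim(R)=2$ is equivalent to $Gdim(R_i)=2$ for every $i$. For condition \textit{2}, Remark \ref{prodinj} identifies the non-singular indecomposable injective $R$-modules with the non-singular indecomposable injective $R_i$-modules (regarded as $R$-modules via the projection $R\to R_i$), and for such a module $E_i$ the $R$-submodules coincide with the $R_i$-submodules; hence $E_i$ is $\tau_{ss}$-cocritical as an $R$-module if and only if it is $\tau_{ss}^{R_i}$-cocritical as an $R_i$-module. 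This makes \textit{2} equivalent to asserting that each $R_i$ admits a non-singular indecomposable injective $R_i$-module which is $\tau_{ss}^{R_i}$-cocritical.

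Finally, each of these componentwise statements is equivalent to the others by Proposition \ref{socr} applied to each simple non-semisimple QI-ring $R_i$, which closes the chain of equivalences. The main obstacle is the careful verification of the product-decomposition dictionary, specifically that the Gabriel filtration, the torsion theories $\tau_{ss}$ and $\tau_g$, and the property of being cocritical all behave componentwise across the factors; once that dictionary is in place the theorem reduces entirely to the simple case already handled in Proposition \ref{socr}.
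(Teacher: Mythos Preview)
Your proposal is correct and takes a genuinely different route from the paper. The paper argues a direct cycle $1\Rightarrow 2\Rightarrow 3\Rightarrow 4\Rightarrow 1$ working inside $R$-Mod: for $2\Rightarrow 3$ it shows that the $E_i$ exhaust (up to isomorphism) the nonsingular indecomposable injectives and that $E(R)$ is a finite direct sum of copies of them, forcing $\tau_{ss}\vee\bigvee\xi(E_i)=\chi$; for $3\Rightarrow 4$ it argues by contradiction, producing a $\tau_{ss}$-cocritical $\tau_g$-torsion module and showing it is semisimple; and $4\Rightarrow 1$ is the obvious one-line implication. Your approach instead sets up a componentwise dictionary once and for all---identifying $R$-tors with $\prod R_i$-tors, matching $\tau_{ss}$, $\tau_g$, the Gabriel filtration, and cocriticality componentwise---and then simply quotes Proposition~\ref{socr} on each factor. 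This is cleaner and more conceptual: it isolates exactly why the theorem is a formal consequence of the simple case, whereas the paper partially re-proves pieces of Proposition~\ref{socr} in the product setting. One small point to make explicit in your write-up: condition~\textit{2} of the theorem asks only for \emph{injective} $E_i$, while condition~\textit{4} of Proposition~\ref{socr} demands \emph{completely injective}; you should note that a $\tau_{ss}$-cocritical injective over a $V$-ring is automatically completely injective (its proper quotients are semisimple, hence injective), so the two hypotheses coincide and your reduction goes through.
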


\begin{proof}
$\textit{1}\Rightarrow\textit{2}$. Since $R$ satisfies $RLS$ then by Lemma \ref{prodrls} each $R_i$ satisfies $RLS$. Hence by Proposition \ref{socr} there exist a non singular indecomposable injective $R_i$-module $E_i$ which satisfies $RLS$. 

$\textit{2}\Rightarrow\textit{3}$. Let $\{\tau_j\}$ be the Gabriel filtration in $R$-tors. By Remark \ref{prodinj} each $E_i$ is a non singular indecomposable injective $R$-module, so each $E_i$ is $\tau_{ss}$-torsion free. Since each $E_i$ is $\tau_{ss}$-cocritical then 
\[\tau_{ss}\vee \bigvee\xi(E_i)\leq \tau_2\]

Now, if $E$ is a non singular indecomposable injective $R$-module then $E$ is a non singular indecomposable injective $R_i$-module for some $1\leq i\leq n$. But since $R_i$ is simple and hence a prime ring, by \cite[Corollary 2.20]{mauacc} $E\cong E_i$. Thus all non singular indecomposable injective $R$-modules, up to isomorphism, are $E_1,...,E_n$. Again by \cite[Corollary 2.20]{mauacc} $\widehat{R}\cong E_1^{k_1}\oplus\cdots\oplus E_n^{k_n}$ where $\widehat{R}$ denotes the injective hull of $R$ for some natural numbers $k_1,...,k_n$. Thus $\tau_{ss}\vee\bigvee\xi(E_i)=\chi$. So, $Gdim(R)=2$.

$\textit{3}\Rightarrow\textit{4}$. If $Gdim(R)=2$ then the Gabriel filtration in $R$-tors is $\{\xi,\tau_{ss},\chi\}$. Since every $R_i$ is a simple left QI-ring non semisimple, then all simple $R$-modules are singular. Thus $\tau_{ss}\leq\tau_g$. 

Suppose $\tau_{ss}<\tau_g$ then there exists $C$ such that $C$ is $\tau_{ss}$-cocritical and $\tau_g$-torsion. If $c\in C$, $ann(c)\leq_eR$, and $ann(c)=I_1\times\cdots\times I_n$ with $I_i\leq_e R_i$. Hence $R_I/I_i$ is singular and we have a monomorphism 
\[R/I\cong(R_1/I_1)\oplus\cdots\oplus(R_n/I_n)\to C\]

Since every $R_i$ is a simple left QI-ring and $R_i/I_i$ is singular then by Proposition \ref{socr} $R_i/I_i$ is a semisimple $R_i$ module, hence it is semisimple as $R$-module. Thus $C$ is semisimple and $\tau_{ss}=\tau_g$.

$\textit{4}\Rightarrow\textit{1}$. Let $I\leq_eR$, then $R/I$ is $\tau_g$-torsion then it is $\tau_{ss}$-torsion. This implies that $R/I$ contains a simple $R$-module. Thus $R$ satisfies $RLS$.
\end{proof}

\bibliographystyle{plain}
\bibliography{biblio}
\end{document}